\newtheorem{theorem}{Theorem}[section]
\newtheorem{definition}{Definition}[subsection]
\newtheorem{lemma}{Lemma}[subsection]
\newtheorem{corollary}{Corollary}[subsection]
\newtheorem{example}{Example}[section]
\title{The Alexander Polynomial of a Rational Link}
\author{Mark E. Kidwell}
\author{Kerry M. Luse}
\begin{document}

\begin{abstract}

We relate some terms on the boundary of the Newton polygon of the Alexander polynomial $\Delta(x,y)$ of a rational link to the number and length of monochromatic twist sites in a particular diagram that we call the standard form. Normalize $\Delta(x,y)$ so that no $x^{-1}$ or $y^{-1}$ terms appear, but $x^{-1}\Delta(x,y)$ and $y^{-1}\Delta(x,y)$ have negative exponents, and so that terms of even total degree are positive and terms with odd total degree are negative. If the rational link has a reduced alternating diagram with no self crossings, then $\Delta(-1, 0) = 1$. If the standard form of the rational link has $m$ monochromatic twist sites, and the $j^{\textrm{th}}$ monochromatic twist site has $\hat{q}_j$ crossings, then $\Delta(-1, 0) = \prod_{j=1}^{m}(\hat{q}_j+1)$. Our proof employs Kauffman's clock moves and a lattice for the terms of $\Delta(x,y)$  in which the $y$-power cannot decrease.

\end{abstract}

\maketitle

\section{Introduction}

It is generally easier to compute the reduced Alexander polynomial $\Delta(x,x)$ of a two-component link than the full two-variable polynomial $\Delta(x,y)$. In this paper, however, we demonstrate some features of the full polynomial $\Delta(x,y)$ of a rational link that seem to have no counterpart for the Alexander polynomial $\Delta(x)$ of a rational knot or for $\Delta(x,x)$ of a rational two-component link.

We use Kauffman's lattice of clock states to accomplish this. Each clock state gives a matching of crossings and incident regions in a link diagram, and corresponds to a non-zero term in the determinant of the Alexander matrix as defined by Alexander in 1928. Each of these terms is an entry in the Alexander polynomial (multiplied by $(1-x)$ or $(1-y)$). Under one of Kauffman's clock moves, the power of $x$ will either increase or decrease by one and the power of $y$ will hold fast or else the power of $y$ will increase or decrease by one and the power of $x$ will hold fast.

Both Alexander and Kauffman require the exclusion of two adjacent regions from a link diagram in their calculations. Edges that border one or both of these regions do not support clock moves, so we call them inactive edges. Any edge that does not border one of these regions supports a clock move, so we call it an active edge. We set up a diagram of a rational link where one component (we chose $y$) is an unknot with no self-crossings and as we traverse this component, active and inactive edges alternate.

As mentioned above, every clock move raises or lowers the degree of one variable, so we can classify our clock moves and their supporting edges as ``uppers'' or ``downers.'' In an alternating link diagram, uppers and downers alternate as we traverse a component. We arrange our diagram so that every active edge along the $y$-component is an upper.

As we go from clock state to clock state via clock moves, the corresponding terms of $\Delta(x,y)$ go from one side of the Newton polygon to the other with no backtracking. (``Forward'' and ``sideways'' are the only options.) We focus in particular on the terms that have no $y$-power. The clocked state at the top of Kauffman's lattice is always one of these. The twist sites in our standard form either have one $x$-strand and one $y$-strand (dichromatic) or two $x$-strands (monochromatic). It is the number and length of these monochromatic twist sites that are detected in the entries along one side of the Newton polygon of $\Delta(x,y)$. If there are no monochromatic twist sites, the term corresponding to the clocked state is the only term on that side.

\section{Background}

\subsection{The Alexander matrix and polynomial}\hfill

Consider a two-component link $L$ with diagram $\hat{L}$.  Number each crossing $c_1, \ldots, c_n$ (where $c(L)=n$), and number each region $r_0, \ldots, r_{n+1}$.  Choose an orientation for each component; label one component $x$ and the other component $y$.  Travel around the link on each component placing dots in the two regions to the left of each undercrossing.   Add labels at each crossing: for each component $t$, the quadrants are marked with $t$, $-t$, $1$ and $-1$ as shown in Figure \ref{fig:alex labels} below.   Figure \ref{fig:alex labels and numbering}(A) shows the Whitehead link with Alexander labels.

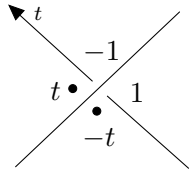
\begin{figure}[h]
\begin{tikzpicture}[line cap=round,line join=round,>=triangle 45,x=1.0cm,y=1.0cm]
\clip(-3.079259259259259,2.1429629629629658) rectangle (2.0762962962962983,5.089629629629634);
%(left, bottom), (right, top)
\draw (-1.62,2.26)-- (0.56,4.32);
\draw (-0.4,3.2)-- (0.68,2.24);
\draw [->] (-0.6,3.4) -- (-1.72,4.42);
\draw (-1.2925925925925914,3.5162962962963) node[anchor=north west] {$t$};
\draw (-0.8659259259259248,2.9029629629629663) node[anchor=north west] {$-t$};
\draw (-0.2348148148148135,3.498518518518522) node[anchor=north west] {$1$};
\draw (-0.8570370370370358,4.067407407407411) node[anchor=north west] {$-1$};
\begin{scriptsize}
\draw[color=black] (-1.3192592592592582,4.302962962962967) node {$t$};
\draw [fill=black] (-0.86,3.3) circle (1.5pt);
\draw [fill=black] (-0.54,3.) circle (1.5pt);
\end{scriptsize}
\end{tikzpicture}
\caption{Alexander dots and labels for each crossing of an alternating link.}\label{fig:alex labels}
\end{figure}

\begin{figure}[h]
        \centering
         \begin{subfigure}[h]{0.75\textwidth}
  \centering
    \includegraphics[width=.75\textwidth]{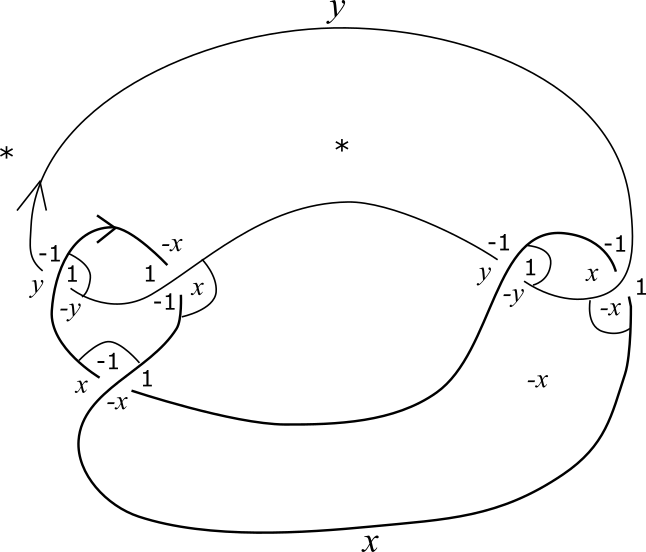}
\caption{An oriented link diagram with Alexander labels.}\label{fig:link with alex labels}
         \end{subfigure}%
        % \quad
          %add desired spacing between images, e. g. ~, \quad, \qquad etc.
           %(or a blank line to force the subfigure onto a new line)
           
         \begin{subfigure}[h]{0.75\textwidth}
               \centering
    \includegraphics[width=.75\textwidth]{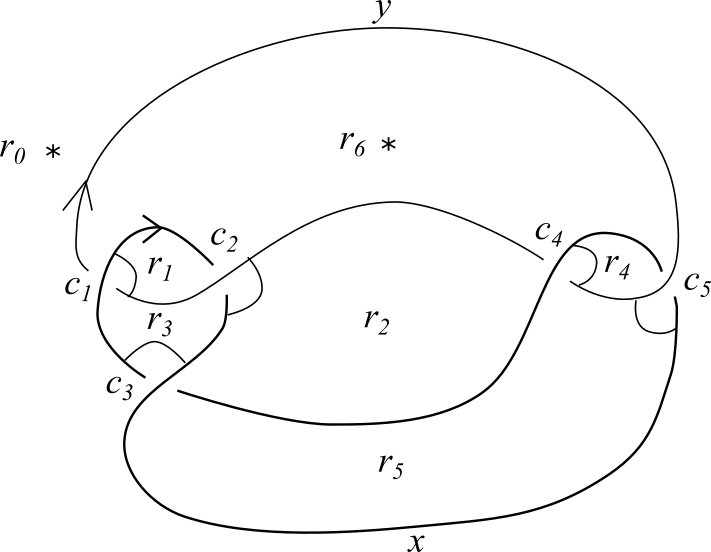}
\caption{Numbering crossings and regions of a link diagram.}\label{fig:link numbering}
         \end{subfigure}
          %add desired spacing between images, e. g. ~, \quad, \qquad etc.
           %(or a blank line to force the subfigure onto a new line)
\caption{}\label{fig:alex labels and numbering}
\end{figure}

Next, create an $n\times (n+2)$ matrix with one row per crossing and one column per region.  The entry in the $c_ir_j$ position is the Alexander label described above at crossing $c_i$ and region $r_j$.  If $c_i$ is not incident to $r_j$, the entry is 0.  This matrix is called the \textbf{\emph{Alexander matrix}}.  Choose two adjacent regions (denoted by $*$); these regions will have one edge in common.  We will call this edge the \textbf{\emph{special edge}}.  Now,  strike out the columns corresponding to the chosen regions in the Alexander matrix.  The result is an $n\times n$ matrix, the \emph{\textbf{reduced Alexander matrix}}.  Taking the determinant of the matrix results in $(1-x)\Delta(x,y)$ or $(1-y)\Delta(x,y)$, depending on the label on the special edge, where $\Delta(x,y)$ is the Alexander polynomial which is invariant up to factors of $\pm x^iy^j$.

Numbering the crossings and regions of the Whitehead link as shown in Figure \ref{fig:alex labels and numbering}(B), it follows that the Alexander matrix is:

$$
\begin{bmatrix}
y&1&0&-y&0&0&-1\\
0&1&x&-1&0&0&-x\\
x&0&1&-1&0&-x&0\\
0&0&y&0&1&-y&-1\\
1&0&0&0&x&-x&-1\\
\end{bmatrix}
$$

Strike out the first and last columns of the matrix, corresponding to the two starred regions.  Taking the determinant and normalizing gives:

$$
\begin{vmatrix}
1&0&-y&0&0\\
1&x&-1&0&0\\
0&1&-1&0&-x\\
0&y&0&1&-y\\
0&0&0&x&-x\\
\end{vmatrix}
=
\begin{matrix}
-xy^2&+x^2y^2\\
2xy&-2x^2y\\
-x&x^2\\
\end{matrix}
\approx
\begin{matrix}
y^2&-xy^2\\
-2y&+2xy\\
+1&-x\\
\end{matrix}
=(1-y)
\begin{pmatrix}
-y&xy\\
1&-x\\
\end{pmatrix}
$$

We will follow the convention of Rolfsen \cite{Rolfsen} to display the Alexander polynomial as a matrix of coefficients where the entry in the matrix in the $i^{\textrm{th}}$ column (left to right) and the $j^{\textrm{th}}$ row (bottom to top) is the coefficient of $x^iy^j$ in $\Delta(x,y)$.  For the example above, the matrix is written 
$\left[\begin{smallmatrix}
-1&1\\
1&-1\\
\end{smallmatrix}
\right]$.
This array is also called the Newton polygon of the polynomial.  In this paper, we will show for rational links that the contributions to the ``bottom row'' of the Alexander polynomial can be counted by the number and size of what we call monochromatic twist sites in a rational link.

\subsection{Kauffman's Lattice}\hfill

In \textit{Formal Knot Theory}, \cite{FKT}, Kauffman develops a state sum formula for the calculation of the Alexander polynomial of a link.  Kauffman's Clock Theorem asserts that any two states involved in the state sum are related by a sequence of moves called clock moves.  Before we go into detail about the Clock Theorem, we remind readers of the states of a link diagram.  Flatten an oriented link diagram so that the over/under information is lost and each crossing becomes a vertex.  The result is a 4-valent plane graph, which is called a \textbf{\textit{universe}} (see Figure \ref{fig:proj+univ}).  If we start with a prime knot or link diagram, we call the resulting universe a prime universe.  If the graph is connected, the universe is called a connected universe.  In this paper, we will only consider connected prime universes, and so will use the word universe to mean connected prime universe.

\begin{figure}[h]
  \centering
    \includegraphics[width=.65\textwidth]{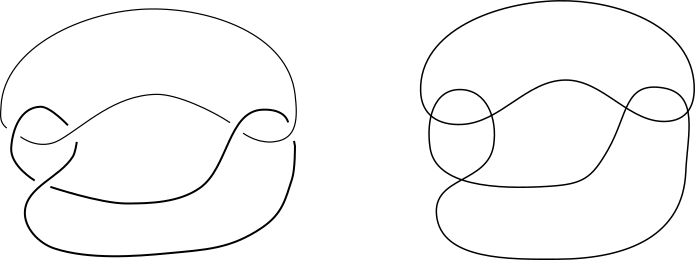}
\caption{A link diagram and a universe.}\label{fig:proj+univ}
\end{figure}

If a universe is labeled with Alexander dots (as described in the previous section), this universe can be used to recover all the crossing information and most of the orientations of the link.  If the universe has a component that splits the dots at each of its crossings, its orientation is not determined.

The number of regions in a link universe is exactly two more than the crossing number, $c(\hat{L})$.  A state of a universe is a selection of two adjacent regions (again denoted by $*$) and an assignment of markers that establishes a one-to-one correspondence between the vertices and the remaining regions (see Figure \ref{fig:example states}).  By the rule for taking determinants, a set of markers also describes one non-zero term in $(1-y)\Delta(x,y)$, assuming that the special edge is part of the $y$ component.

\begin{figure}[h]
        \centering
         \begin{subfigure}[h]{0.5\textwidth}
  \centering
    \includegraphics[width=.5\textwidth]{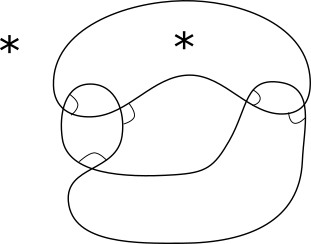}
 \caption{}\label{fig:example statesa}
         \end{subfigure}% 
         \begin{subfigure}[h]{0.5\textwidth}
               \centering
    \includegraphics[width=.5\textwidth]{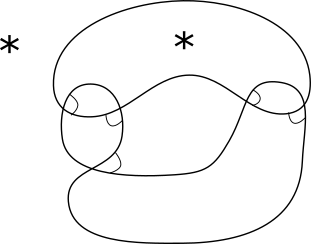}
\caption{}\label{fig:example statesb}
         \end{subfigure}
             \begin{subfigure}[h]{0.5\textwidth}
               \centering
    \includegraphics[width=.5\textwidth]{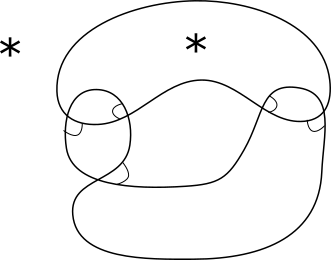}
\caption{}\label{fig:example statesc}
         \end{subfigure}
\caption{States of a universe.}\label{fig:example states}
 \end{figure}

Notice that states $(A)$ and $(B)$ in Figure \ref{fig:example states} differ by exchanging markers in adjacent regions.  In particular, they differ by the move in Figure \ref{fig:clock move}, called a \textbf{\textit{clock move}}.

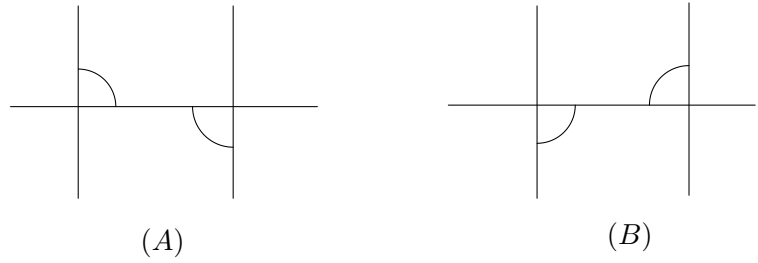
\begin{figure}[h]
\begin{tikzpicture}[line cap=round,line join=round,>=triangle 45,x=1.0cm,y=1.0cm]
\clip(-3.8666666666666676,-0.33) rectangle (7.7333333333333325,4.55);
%the above line adjusts white space around the drawing: (left, bottom), (right, top)
\draw (-3.52,2.16)-- (0.56,2.16);
\draw (2.3,2.18)-- (6.38,2.18);
\draw (-2.62,3.5)-- (-2.62,0.94);
\draw (-0.56,3.5)-- (-0.56,0.94);
\draw (3.48,3.5)-- (3.48,0.94);
\draw (5.5,3.54)-- (5.5,0.98);
\draw [shift={(-2.62,2.16)}] plot[domain=0.:1.5707963267948966,variable=\t]({1.*0.5*cos(\t r)+0.*0.5*sin(\t r)},{0.*0.5*cos(\t r)+1.*0.5*sin(\t r)});
\draw [shift={(-0.56,2.16)}] plot[domain=3.141592653589793:4.71238898038469,variable=\t]({1.*0.54*cos(\t r)+0.*0.54*sin(\t r)},{0.*0.54*cos(\t r)+1.*0.54*sin(\t r)});
\draw [shift={(3.48,2.18)}] plot[domain=-1.5707963267948966:0.,variable=\t]({1.*0.508*cos(\t r)+0.*0.508*sin(\t r)},{0.*0.508*cos(\t r)+1.*0.508*sin(\t r)});
\draw [shift={(5.5,2.18)}] plot[domain=1.5707963267948966:3.141592653589793,variable=\t]({1.*0.526*cos(\t r)+0.*0.526*sin(\t r)},{0.*0.526*cos(\t r)+1.*0.526*sin(\t r)});
\draw (-1.9266666666666674,0.67) node[anchor=north west] {$(A)$};
\draw (4.2733333333333325,0.75) node[anchor=north west] {$(B)$};
\end{tikzpicture}
\caption{A clock move.}\label{fig:clock move}
\end{figure}

A clock move is associated with an edge.  We will call edges about which clock moves can potentially occur \textbf{\textit{active edges}}.  Thus a clock move is a clockwise $90^{\circ}$ rotation of the two markers at the endpoints of an active edge.  Not all edges in a universe are active edges.  In particular, if an edge is incident to one, or both, of the starred regions then it cannot support a clock move.  We call such an edge an \textbf{\textit{inactive edge}}.  Similarly, we can perform a counterclock move by rotating two markers at the endpoints of an active edge counterclockwise $90^{\circ}$, as in Figure \ref{fig:counterclock move}.

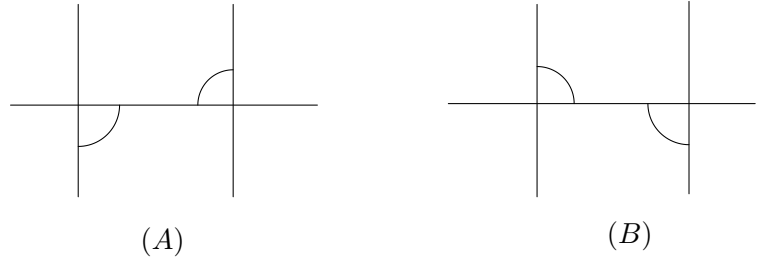
\begin{figure}[h]
\begin{tikzpicture}[line cap=round,line join=round,>=triangle 45,x=1.0cm,y=1.0cm]
\clip(-3.8666666666666676,-0.33) rectangle (7.7333333333333325,4.55);
\draw (-3.52,2.16)-- (0.56,2.16);
\draw (2.3,2.18)-- (6.38,2.18);
\draw (-2.62,3.5)-- (-2.62,0.94);
\draw (-0.56,3.5)-- (-0.56,0.94);
\draw (3.48,3.5)-- (3.48,0.94);
\draw (5.5,3.54)-- (5.5,0.98);
\draw (-1.9266666666666674,0.67) node[anchor=north west] {$(A)$};
\draw (4.2733333333333325,0.75) node[anchor=north west] {$(B)$};
\draw [shift={(-2.62,2.16)}] plot[domain=-1.5707963267948966:0.,variable=\t]({1.*0.55*cos(\t r)+0.*0.55*sin(\t r)},{0.*0.55*cos(\t r)+1.*0.55*sin(\t r)});
\draw [shift={(3.48,2.18)}] plot[domain=0.:1.5707963267948957,variable=\t]({1.*0.4933333333333332*cos(\t r)+0.*0.4933333333333332*sin(\t r)},{0.*0.4933333333333332*cos(\t r)+1.*0.4933333333333332*sin(\t r)});
\draw [shift={(-0.56,2.16)}] plot[domain=1.5707963267948966:3.141592653589793,variable=\t]({1.*0.47*cos(\t r)+0.*0.47*sin(\t r)},{0.*0.47*cos(\t r)+1.*0.47*sin(\t r)});
\draw [shift={(5.5,2.18)}] plot[domain=3.141592653589793:4.71238898038469,variable=\t]({1.*0.5466666666666669*cos(\t r)+0.*0.5466666666666669*sin(\t r)},{0.*0.5466666666666669*cos(\t r)+1.*0.5466666666666669*sin(\t r)});
\end{tikzpicture}
\caption{A counterclock move.}\label{fig:counterclock move}
\end{figure}

The Clock Theorem says that any two states in a connected universe differ by a clock move, or a sequence of clock moves.  Notice in Figure \ref{fig:example states} that states $(B)$ and $(C)$ also differ by a clock move (and $(A)$ and $(C)$ are related by performing both moves).

If a term in the determinant of the reduced Alexander matrix is thought of as a permutation of the numbered rows with the numbered columns, then a clock move represents a transposition of the permutation.

We say that a \textbf{\textit{clocked state}} is a state with only clock moves available and a \textbf{\textit{counterclocked state}} is a state with only counterclock moves available.  In Formal Knot Theory, Kauffman proves:

\begin{theorem}[Theorem 2.5 in \cite{FKT}]
Let $\mathcal U$ be a universe and $\mathcal S$ be the set of states of $\mathcal U$ for a given choice of adjacent fixed stars.  Then:

\begin{enumerate}
\item $\mathcal S$ has a unique clocked state and a unique counterclocked state
\item Any state in $\mathcal S$ can be reached from the clocked (counterclocked) state by a series of clock (counterclock) moves.
\item Any two states in $\mathcal S$ are connected by a series of state transpositions, or clock moves.
\end{enumerate}

\end{theorem}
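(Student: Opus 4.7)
The plan is to introduce a partial order on $\mathcal{S}$ whose covering relations are clock moves, and then prove that this partial order is a finite lattice; all three claims will fall out together.

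First, I would construct an integer-valued height function $h\colon \mathcal{S} \to \mathbb{Z}$ that strictly increases by $1$ under each clock move. At every vertex, the four surrounding regions give four possible marker positions; encoding them consistently, using the position of the starred regions as a global reference, lets one sum local contributions to obtain $h$. Writing $S \preceq T$ whenever $T$ results from $S$ by some sequence of clock moves then defines a genuine partial order, since antisymmetry is forced by monotonicity of $h$. Existence of at least one clocked and at least one counterclocked state is immediate from finiteness of $\mathcal{S}$: iterating clock (resp.\ counterclock) moves from any starting state must terminate at a state with no clock (resp.\ counterclock) moves available.

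The substantive step is proving that any two states are connected by clock/counterclock moves and that $(\mathcal{S}, \preceq)$ is actually a lattice; the clocked and counterclocked states will then be the unique maximum and minimum. I would attack this by encoding each state as a combinatorial object for which clock moves become a well-studied elementary operation. The two routes I would try are (i) identifying states with spanning trees of a checkerboard graph of the universe, so that clock moves become tree-edge exchanges and classical matroid connectivity applies, or (ii) identifying states with acyclic orientations or linear extensions of an auxiliary planar poset built from the universe, where the lattice structure is automatic. Either route requires first setting up the bijection and checking that clock moves translate faithfully into the elementary move on the combinatorial side; planarity of the universe should be what ensures that the resulting "swap graph" is connected.

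Once connectivity and the lattice property are in hand, uniqueness in (1) is simply uniqueness of a lattice maximum/minimum; (2) follows because iterating clock moves from any $S$ terminates in a clocked state, which is now unique; and (3) follows by concatenating paths up to and back down from that unique maximum. The main obstacle is the lattice step: a finite poset with a monotone height function can easily have many maximal elements, so uniqueness does not follow from $h$ alone. The real work lies in exploiting planarity and the strictly local nature of clock moves to manufacture common upper bounds for arbitrary pairs of states, and this is where I expect the combinatorial core of Kauffman's argument — and most of the effort — to reside.
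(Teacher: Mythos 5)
You should first note that the paper does not prove this statement at all: it is quoted as Theorem 2.5 of Kauffman's \emph{Formal Knot Theory} \cite{FKT} and used as a black box, so there is no in-paper proof to compare against. Judged on its own terms, your proposal correctly identifies the three things that must be established (acyclicity of the clock-move digraph, connectivity of the state graph, and the lattice property), but it defers all of the substantive content, and two steps you treat as routine would not go through as described.

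First, the height function. Summing local angular positions of the markers, referenced to the starred regions, does not automatically give a function that changes by a fixed amount under every clock move: at a spinner (a vertex incident to neither starred region) the marker can travel through all four quadrants, and any lift of the $\mathbb{Z}/4$-valued angular data to $\mathbb{Z}$ has a wrap-around at some quadrant boundary, where the local contribution drops by $3$ instead of rising by $1$. Proving that no sequence of clock moves returns to its starting state --- equivalently, that such an $h$ exists --- is part of the content of the theorem, not a preliminary; and without it your existence argument for clocked and counterclocked states (termination of iterated clock moves) is also unsupported. Second, the appeal to matroid connectivity in route (i): states do biject with spanning trees of a checkerboard graph, and basis exchange shows that the graph of \emph{all} single-edge exchanges is connected, but a clock move realizes only those exchanges supported on an active edge whose two markers are already in the correct adjacent positions. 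Connectivity under this restricted, local, and directed move set is precisely the hard part and does not follow from matroid theory. Finally, you explicitly leave the lattice step --- manufacturing common upper bounds, hence uniqueness of the clocked state --- as the place where the real work lies; that diagnosis is accurate, but it means what you have is a plan rather than a proof. Kauffman's actual argument is an induction on the universe via a decomposition into smaller universes, and the lattice structure is treated carefully by Gilmer and Litherland \cite{Gil-Lith}, whose Lemmas 6 and 7 (cited elsewhere in this paper) supply the commutation of available clock moves that underlies the existence of joins.
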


Thus, given a link universe with a fixed pair of starred (adjacent) regions, one can create a lattice of states with the clocked state at the top and the counterclocked state at the bottom.   There is an arrow between two states in the lattice if they are related by a single clock move (see Figure \ref{fig:lattice example}).  Notice that once an active edge has markers in position to do a clock move, that clock move remains available until it is performed.  Hence, available clock moves can be made in any order and yield the same result (see Lemmas 6 and 7 on page 245 in \cite{Gil-Lith}).

\begin{figure}[h]
  \centering
    \includegraphics[width=.65\textwidth]{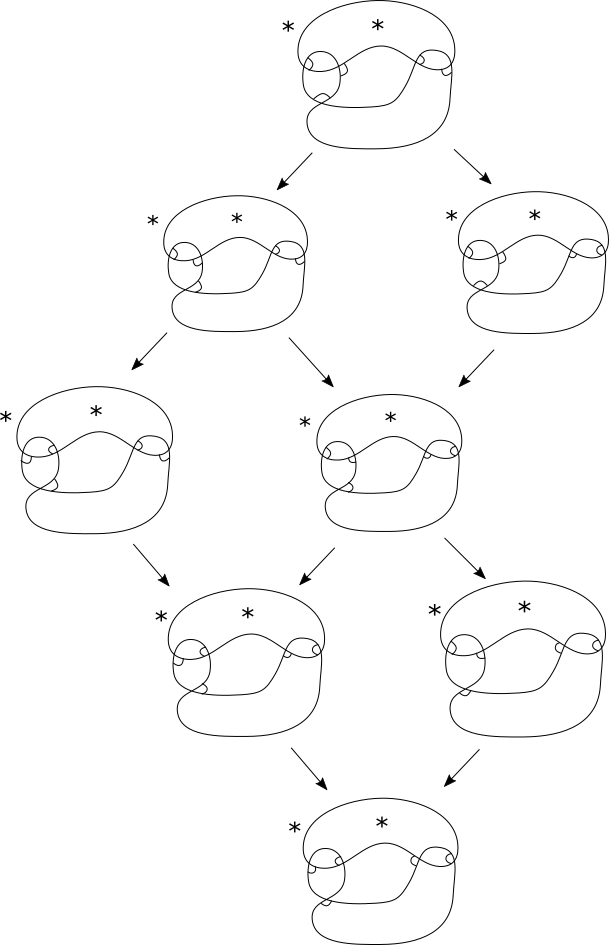}
\caption{A lattice of a universe.}\label{fig:lattice example}
\end{figure}

Since a set of markers describes one non-zero term in $(1-y)\Delta(x,y)$, each state in the lattice corresponds to a term.  We will call the term corresponding to the clocked state the \textbf{\emph{clocked term}}.  Recall Figure \ref{fig:alex labels and numbering}$(B)$ which shows the clocked state of the Whitehead link.  The contribution of the clocked term is the product of the Alexander labels corresponding to the markers in the state.  In this example, the state contributes the term $1\cdot x\cdot -1\cdot 1\cdot -x=x^2$ to the Alexander polynomial.

The Alexander labels on all the corners of a given region in an alternating diagram will have the same sign.  Thus the non-zero terms in any column of the Alexander matrix will have the same sign.  We can remove all the minus signs from the Alexander matrix, with the possible effect of multiplying the determinant of the reduced matrix by $-1$.  Evaluating the determinant of a matrix with all terms $0, 1, x,$ or $y$ does not, however, produce a polynomial with all non-negative coefficients because of the sign convention of the determinant calculation.  Each term represents a permutation of the row and column numbers, and terms that represent an odd permutation are given a minus sign.

These row/column permutations with all non-zero entries correspond exactly to Kauffman's marker states.  We will introduce a way to number our crossings and regions so that the clocked state corresponds to the identity permutation.  Moreover, clock moves change the permutation by a transposition, and thus change the parity.  We have also seen that a clock move raises or lowers the power of one of our variables by one.  Thus if a term has even total degree before a clock move, (of the form $a_{ij}x^iy^j$ with $i+j$ even), it will have odd total degree after the clock move, and vice versa.  We can arrange our Alexander polynomial so that terms of even total degree have positive coefficients and terms of odd total degree have negative coefficients.  One consequence of these observations is that the contribution to $\Delta(x,y)$ from different clock states do not cancel, provided we start with an alternating diagram.  With our conventions, the total number of clock states equals $\Delta(-1,-1)$.

Finally, we need to distinguish between the types of vertices in a link universe.  Consider a link universe with specified starred regions created from a link diagram $\hat{L}$.  There are three types of vertices: special vertices, boundary vertices, and spinners.  The \textbf{\textit{special vertices}} are the two vertices incident to the special edge.  The \textbf{\textit{boundary vertices}} are vertices incident to one of the starred regions.  A \textbf{\textit{spinner}} is a vertex not incident to either of the starred regions, that is, a vertex that is not special nor on the boundary.  Figure \ref{fig:three vertex types} shows the three types of vertices.  This figure shows a piece of a diagram, not a universe, since later we will make use of the over/under information.  The overstrand is still considered to be two edges incident at that crossing.

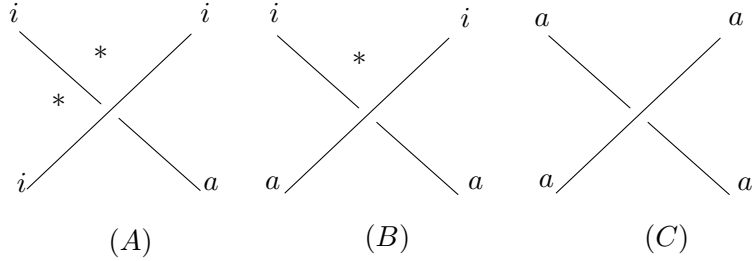
\begin{figure}[h]
\begin{tikzpicture}[line cap=round,line join=round,>=triangle 45,x=1.0cm,y=1.0cm]
\clip(-2.552592592592593,1.2137037037037039) rectangle (9.047407407407412,5.266296296296299);
\draw (-1.62,2.26)-- (0.56,4.32);
\draw (-0.4,3.2)-- (0.68,2.24);
\draw (1.4074074074074088,2.5662962962962976) node[anchor=north west] {$a$};
\draw (0.5474074074074083,4.866296296296299) node[anchor=north west] {$i$};
\draw (0.5874074074074084,2.5462962962962976) node[anchor=north west] {$a$};
\draw (-1.9925925925925925,4.866296296296299) node[anchor=north west] {$i$};
\draw (-1.7192592592592582,4.351851851851856)-- (-0.6392592592592582,3.3918518518518557);
\draw (-0.8725925925925921,4.306296296296298) node[anchor=north west] {$*$};
\draw (1.8074074074074085,2.206296296296296)-- (3.9874074074074066,4.2662962962962965);
\draw (3.0274074074074093,3.1462962962962964)-- (4.107407407407408,2.1862962962962964);
\draw (-1.8925925925925926,2.6062962962962977) node[anchor=north west] {$i$};
\draw (4.00740740740741,4.806296296296298) node[anchor=north west] {$i$};
\draw (4.107407407407409,2.5662962962962976) node[anchor=north west] {$a$};
\draw (1.4674074074074088,4.866296296296299) node[anchor=north west] {$i$};
\draw (1.7081481481481504,4.298148148148152)-- (2.788148148148151,3.338148148148152);
\draw (2.567407407407409,4.226296296296298) node[anchor=north west] {$*$};
\draw (-1.4325925925925924,3.666296296296298) node[anchor=north west] {$*$};
\draw (5.414074074074077,2.2062962962962978)-- (7.594074074074077,4.266296296296298);
\draw (6.634074074074077,3.146296296296298)-- (7.714074074074077,2.186296296296298);
\draw (5.04740740740741,2.5862962962962976) node[anchor=north west] {$a$};
\draw (7.567407407407411,4.746296296296299) node[anchor=north west] {$a$};
\draw (7.687407407407411,2.5462962962962976) node[anchor=north west] {$a$};
\draw (4.98740740740741,4.726296296296298) node[anchor=north west] {$a$};
\draw (5.314814814814819,4.298148148148154)-- (6.3948148148148185,3.3381481481481536);
\draw (-0.6725925925925921,1.8662962962962975) node[anchor=north west] {$(A)$};
\draw (2.727407407407409,1.9262962962962975) node[anchor=north west] {$(B)$};
\draw (6.447407407407411,1.9262962962962975) node[anchor=north west] {$(C)$};
\end{tikzpicture}
\caption{The three types of vertices: special, boundary, and spinner.}\label{fig:three vertex types}
\end{figure}

Clock states and clock moves are easier to analyze at vertices that are incident to one or both of the starred regions.  At one of the special vertices, the clock marker can only be in one of the two unstarred regions.  Furthermore, only one clock move is possible because there is only one active edge incident to such a vertex.  At any other boundary vertex, the clock marker can only be in one of three unstarred regions.  In this instance, only two clock moves are possible.  There are two active edges incident to such a vertex.  In particular, the two active edges are next to each other.

At a vertex that is not incident to a starred region, all four incident edges are active and the clock marker may move freely through the four incident regions.  This is the reason for the name \textbf{\textit{spinner}} for these vertices, although it is the clock marker that does the actual spinning. It is an important feature of rational knots and links that they have diagrams with no spinners.

\subsection{Conway conventions}\hfill

In this section, we introduce the conventions we follow regarding rational tangles (see \cite{Conway}).  We will draw our rational tangles in a herringbone pattern starting at the northwest (NW) quadrant of the tangle and ending at the southeast (SE) quadrant (Figure \ref{fig:standard knot}).  In addition:

\begin{enumerate}
\item The first and last twist sites have at least two crossings.\label{tangle cond 1}
\item The twist sites alternate between horizontal and vertical.  This rule defines whether a twist site with a single crossing is horizontal or vertical. \label{tangle cond 2}
\item All horizontal twist sites are left-turning and all vertical twist sites are right-turning.\label{tangle cond 3}
\item The last (SE) twist site is horizontal.\label{tangle cond 4}
\end{enumerate}

\begin{figure}[h]
        \centering
         \begin{subfigure}[h]{0.75\textwidth}
  \centering
    \includegraphics[width=.75\textwidth]{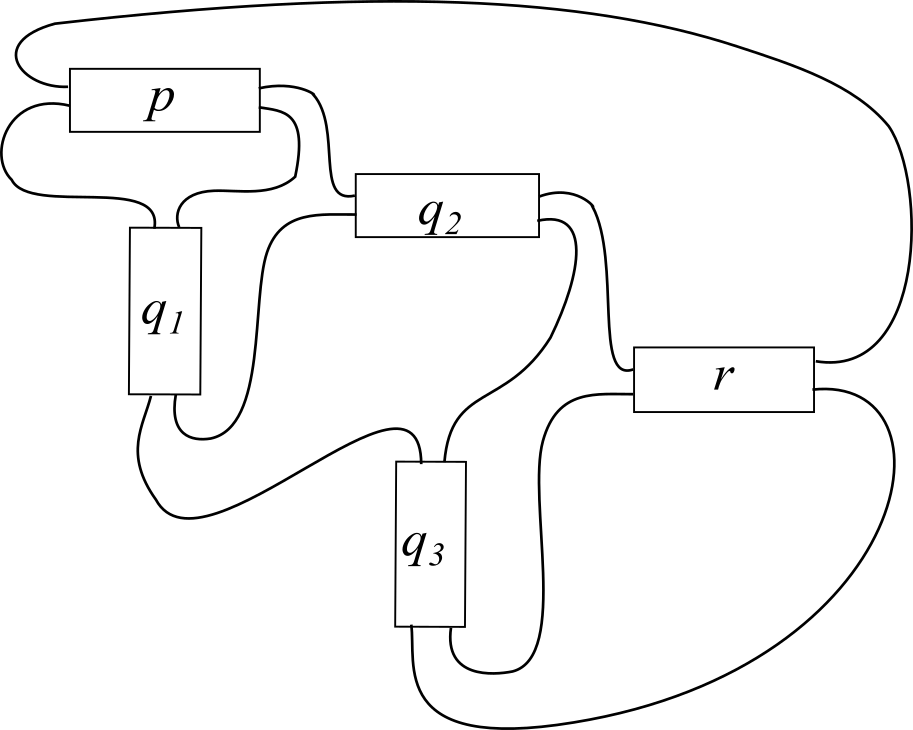}
 \caption{}\label{fig:standard knot horiz}
         \end{subfigure}%
          
         \begin{subfigure}[h]{0.75\textwidth}
               \centering
    \includegraphics[width=.75\textwidth]{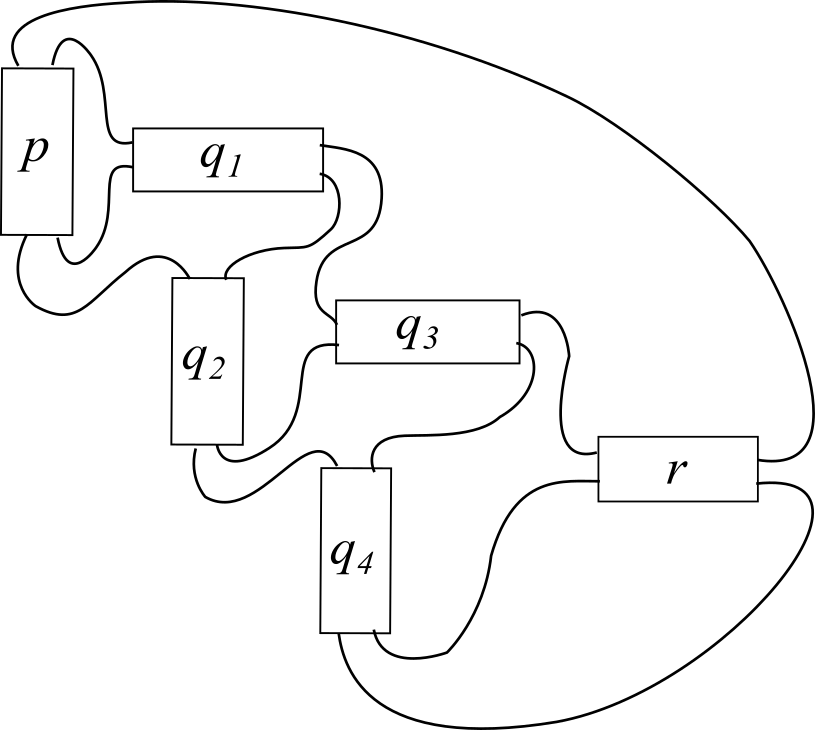}
\caption{}\label{fig:standard knot vert}
         \end{subfigure}
\caption{Standard diagrams of a rational link.}\label{fig:standard knot}
 \end{figure}

These rules have a number of simple consequences. Rule \ref{tangle cond 3} ensures that our tangle diagrams are alternating.  At every crossing, the overcrossing segment has positive slope.  In keeping with Rule \ref{tangle cond 4}, we will always use the ``numerator closure,'' $N(T)$, (NW strand joined to NE, SW strand joined to SE) when making our tangles into rational knots or links.  By ``horizontal'' and ``vertical''  twist sites, we mean integer tangles of the form shown in Figure \ref{fig:horiz and vert sites}.

\begin{figure}[h]
    \begin{subfigure}[c]{0.5\textwidth}
    \centering
\includegraphics[width=.5\textwidth]{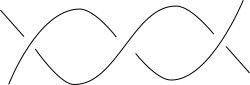}
                 \label{fig:horiz twist site}
         \end{subfigure}
    \begin{subfigure}[c]{0.25\textwidth}
%\centering
	\includegraphics[width=.25\textwidth]{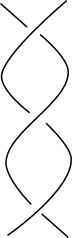}
                 \label{fig:vert twist site}
         \end{subfigure}
\caption{Horizontal and vertical twist sites are integer tangles.}\label{fig:horiz and vert sites} 
\end{figure}

We will write the Conway notation (see \cite{Conway}) for a rational tangle with three or more twist sites as $pq_1q_2\ldots q_kr$, where $p\geq2$, $r\geq2$, and $q_i\geq1$ for $i=1,\ldots,k$ and $p$ is the number of crossings in the first twist site, $q_i$ is the number of crossings in the $i^{\textrm{th}}$ internal twist site, and $r$ is the number of crossings in the final twist site. Note that the total number of crossings is $n=p+\sum_{i=1}^kq_i+r$.  There are $k+2$ total twist sites in such a tangle. For ease of notation, we will refer to the location of the site by its length.

This notation also allows us to number the crossings and regions in a consistent manner.  Beginning with the first twist site, number these crossings $c_1, c_2, \ldots c_p$ from left to right or top to bottom, depending on whether the first twist site is horizontal or vertical.  Continue the numbering ($c_{p+1},\ldots, c_{p+q_1}$) in the next twist site, again moving left to right or top to bottom, and so on until the final crossing, $c_n$.  The two special vertices are labeled $c_1$ and $c_n$.  

For the regions, we label the unbounded region $r_0$ and the region enclosed by the NW/NE strand of the numerator closure as $r_{n+1}$.  Notice that all of the crossings in vertical twist sites are incident to $r_{0}$ and all of the crossings in horizontal twist sites are incident to $r_{n+1}$. 

Assign $r_0$ and $r_{n+1}$ to be the starred regions and place markers for the clocked state in the diagram.  The clocked state can be achieved by placing markers at each crossing in the region located clockwise to the starred region(s) at that crossing. Now label the regions so that the clocked state corresponds to the main diagonal of the reduced (square) Alexander matrix. That is, at crossing $c_1$, look at the adjacent region that has a marker in the clocked state and call that $r_1$. Then do the same for $c_2$ and $r_2$, and so forth.  This numbering scheme was used to label the Whitehead link in Figure \ref{fig:alex labels and numbering}$(B)$.  We will always use this procedure to number the crossings and regions for diagrams of rational links in this form.  

The pictures in Figure \ref{fig:standard knot} will be referred to as a \emph{``herringbone''} diagram of a knot or link.  These diagrams follow the conventions given by Conway in \cite{Conway}.  However, one can imagine pulling on the right hand side of the diagram until all of the vertical twist sites appear to be horizontal.  The diagram will then look like the standard 4-plat diagram of a rational knot or link.

The numerator closure of a rational tangle will produce either a knot or a link depending on the tangle.  In this paper we are focusing on rational links which have two components.  Thus, it is important to be able to tell when a rational tangle will close to a knot or to a link.  Figure \ref{fig:tangle strands} below illustrates three possibilities for the strands of our rational tangle.  For example, in Figure \ref{fig:tangle strands}$(A)$, the strand at the NW post goes through the tangle and exits at the SE post, in Figure \ref{fig:tangle strands}$(B)$, the strand at the NW post goes through the tangle and exits at the SW post.  When taking the numerator closure, the two tangles in Figure \ref{fig:tangle strands}$(A)$ and $(B)$ will close to knots while the tangle in Figure \ref{fig:tangle strands}$(C)$ will close to a 2-component link.

\begin{figure}[h]
\begin{tikzpicture}[line cap=round,line join=round,>=triangle 45,x=0.65cm,y=0.65cm]
\clip(-5.105,-2) rectangle (10.995,4.45);
\draw(-1.8,1.82) ellipse (1.0973779031913835cm and 1.1642402278336579cm);
\draw(3.68,1.8) ellipse (1.0973779031913835cm and 1.1642402278336579cm);
\draw(8.73,1.85) ellipse (1.0973779031913835cm and 1.164240227833658cm);
\draw (-3.81,3.65)-- (0.21,-0.25);
\draw (-0.18,3.8)-- (-3.66,-0.4);
\draw [shift={(8.73,-0.76)}] plot[domain=0.4699766384170082:2.665523323267032,variable=\t]({1.*2.119834899231541*cos(\t r)+0.*2.119834899231541*sin(\t r)},{0.*2.119834899231541*cos(\t r)+1.*2.119834899231541*sin(\t r)});
\draw [shift={(8.7,4.16)}] plot[domain=3.527929868411108:5.984686375593407,variable=\t]({1.*1.9108375127153014*cos(\t r)+0.*1.9108375127153014*sin(\t r)},{0.*1.9108375127153014*cos(\t r)+1.*1.9108375127153014*sin(\t r)});
\draw [shift={(6.18,1.79)}] plot[domain=1.9815132899095687:4.270796618077399,variable=\t]({1.*2.0287188075236053*cos(\t r)+0.*2.0287188075236053*sin(\t r)},{0.*2.0287188075236053*cos(\t r)+1.*2.0287188075236053*sin(\t r)});
\draw [shift={(0.96,1.82)}] plot[domain=-1.009302663527798:1.0557235097761148,variable=\t]({1.*2.1973848092675983*cos(\t r)+0.*2.1973848092675983*sin(\t r)},{0.*2.1973848092675983*cos(\t r)+1.*2.1973848092675983*sin(\t r)});
\draw (-2.3,-1) node[anchor=north west] {$(A)$};
\draw (3,-1) node[anchor=north west] {$(B)$};
\draw (8.3,-1) node[anchor=north west] {$(C)$};
\end{tikzpicture}
\caption{Three possibilities for tangle strands.}\label{fig:tangle strands}
\end{figure}
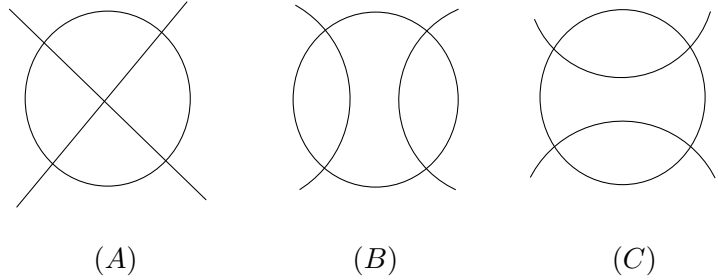

\subsection{Consequences of Conway's conventions}\hfill

When traveling along a knot or link diagram, we can use colors/labels to keep track of the components.  Visually, it is nice to picture links with different color strands.  For this reason, in this paper we will refer to the components as the ``green'' and ``red'' components.  For calculation purposes, the components need labels.  We use the label $y$ for the green strand and $x$ for the red strand.  At any given twist site, we will see crossings either between strands of two different colors or strands with the same color. 

\begin{definition} A \underline{\bf{monochromatic}} twist site is a twist site whose crossings consist of strands of a single color/label.
\end{definition}

\begin{definition} A \underline{\bf{dichromatic}} twist site is a twist site whose crossings consist of strands of two colors/labels.
\end{definition}

\begin{lemma}\label{Lem mono x label} For a rational tangle in herringbone form, any monochromatic twist sites that exist will all have the same color/label.
\end{lemma}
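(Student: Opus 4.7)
The plan is to trace the two strands of the rational tangle through the herringbone diagram and to show that one of them has a simple path, visiting each twist site at most once. Since a monochromatic twist site must be visited twice by a single strand, all monochromatic sites will then belong to the other strand and share a single color.

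For the setup, I work in the two-component case (otherwise the two colors are not both present). By Figure \ref{fig:tangle strands}(C), the tangle has connectivity NW-NE and SW-SE; I label the strand joining NW and NE as $y$, and the one joining SW and SE as $x$. A twist site $T_i$ consists of two internal pathways pairing its four corners, with the pairing determined by the orientation (horizontal or vertical) and the parity of $a_i$. The site is monochromatic iff both pathways are sub-arcs of a single global strand, equivalently iff all four of its corners route, through the rest of the tangle, to the outer endpoints of one component.

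Next I would follow the $y$-strand from outer NW through the staircase of twist sites $T_1, \ldots, T_m$. Rule \ref{tangle cond 4} places the last twist site $T_m$ horizontally at the SE corner, and combined with the alternation of Rule \ref{tangle cond 2} this puts outer NE at a corner of $T_m$ reachable along the upper boundary of the staircase. Analyzing how consecutive horizontal and vertical twist sites share two corners, together with the parity behavior of the pathways inside each site, I would verify that the $y$-strand enters and exits each site it visits only through its upper corners. In particular it passes through each site at most once, so no twist site can have both pathways in $y$; every monochromatic site is therefore $x$-colored.

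The main obstacle is establishing the simple-path claim in full geometric detail. A straightforward induction by peeling off $T_m$ does not apply directly, because the reduced tangle $[a_1, \ldots, a_{m-1}]$ need not have an even-numerator fraction and so need not give a 2-component link, so the hypothesis is not inherited. I therefore expect the proof to proceed by a direct case analysis of the corner-sharing pattern between consecutive twist sites in the herringbone, together with a parity argument inside each site. The base case $m=1$ is trivial: the 2-component condition forces an even number of crossings at the single horizontal site, which is then dichromatic. Since the lemma only asserts that all monochromatic sites share one color, one is free to designate the simple-path component as $y$, so the shared color is $x$.
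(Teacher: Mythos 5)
Your reduction is the right one---label the strand with the NW endpoint $y$ and show that no twist site can be monochromatic in $y$---but the heart of the argument is missing. Everything rests on the claim that the $y$-strand passes through each twist site at most once (equivalently, that it enters and exits each site it visits only through ``upper'' corners), and you do not establish it: you observe yourself that induction by peeling off the last twist site fails because the sub-tangle need not inherit the hypothesis, and you then defer to an unspecified ``direct case analysis of the corner-sharing pattern together with a parity argument.'' That case analysis is the entire content of the lemma, so as written this is a plan rather than a proof. A smaller point: the lemma is stated for a rational tangle, whose two strands always carry the two labels whether or not the numerator closure is a two-component link, so restricting to the connectivity NW--NE, SW--SE narrows the statement unnecessarily.

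The paper closes exactly this gap by tracking labels on the four posts during the inductive construction of the tangle, rather than tracing a strand through the finished diagram. Assign each strand's label to the posts where it currently ends, so at every stage there are two $y$-posts and two $x$-posts. A horizontal twist operates only on the NE and SE posts, a vertical twist only on the SE and SW posts, and in either case the effect on the labels is at most a swap of the two posts involved. The NW post is never touched, so it retains its $y$-label throughout the construction; since only one other post can then carry a $y$, every twisting operation involves at most one $y$-labelled strand, and any monochromatic twist site must be $x$-labelled. No geometric tracing of the strand through the staircase is needed. If you want to rescue your approach, the invariant to carry along the herringbone is precisely this post-labelling statement, not the simple-path property of the $y$-strand.
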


\begin{proof} (With thanks to Hugh Morton.) A rational 2-tangle consists of two strands.  The strands have four endpoints which we will call the NW, NE, SE, and SW posts of the tangle.  We label the strand with a NW endpoint with $y$  and the other strand with an $x$.  We think of these labels as also applying to the posts, so there are two $x$-posts and two $y$-posts in any 2-tangle.

We construct our rational tangles according to Conway's herringbone pattern.  When constructing a horizontal twist site, we twist the NE and SE posts around each other.  If the two strands involved have different labels and we construct a crossing site with an odd number of crossings, the labels on the NE and SE posts are switched.  Similarly, when constructing a vertical twist site, we twist the SE and SW posts around each other, with a possible switch of labels.

The NW post is not involved in either of these operations, so it retains its $y$ label.  Only one other post can have a $y$-label, so the twisting operations that create new twist sites must involve two $x$-labels or an $x$- and a $y$-label.  Thus, monochromatic $x$-labeled twist sites and dichromatic twist sites are possible, but monochromatic $y$-labeled twist sites are not possible.

\end{proof}

We will call the strand going into the tangle at the NW post and leaving the tangle at the NE post the $y$-strand and the strand going into the tangle at the SW post and leaving the tangle at the SE post is the $x$-strand.  The numerator closure will create a rational two-component link.   Furthermore, the $y$-component is an unknot in standard position (that is, no monochromatic $y$-crossings).  Hence, by Lemma \ref{Lem mono x label}, all monochromatic twist sites will have an $x$-label.

\begin{corollary}\label{Cor: first and last} In a rational link, both the initial and final twist sites must be dichromatic.
\end{corollary}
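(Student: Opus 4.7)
My plan is to apply Lemma \ref{Lem mono x label}, which forces both strands of any monochromatic twist site in herringbone form to carry the label $x$. Equivalently, a twist site containing a $y$-labeled strand must be dichromatic. It therefore suffices to exhibit a $y$-strand incident to the first twist site and another incident to the last twist site.

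In the two-component setting of this corollary, the $y$-strand is the arc running from the NW post to the NE post, and the $x$-strand is the arc from SW to SE. By the herringbone convention, the first twist site is the unique twist site occupying the NW quadrant of the diagram, so the NW post is joined to one of its four strand endpoints by an arc containing no intervening crossings. Hence the $y$-strand, beginning at NW, traverses the first twist site, and Lemma \ref{Lem mono x label} gives that this site is dichromatic. For the last twist site, Rule~\ref{tangle cond 4} tells us it is horizontal and occupies the SE quadrant; being the final twist site of the herringbone, its two right-hand strand endpoints connect externally (without intervening crossings) to the two east-side posts, which are precisely NE and SE. Since NE is a $y$-post, the $y$-strand also traverses the last twist site, and Lemma \ref{Lem mono x label} again gives dichromatic.

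The main subtlety is justifying the geometric claims that NW is adjacent to the first twist site and that NE and SE are both adjacent to the last twist site without intervening crossings. This is implicit in Conway's herringbone layout together with Rule~\ref{tangle cond 4}, but the argument should be stated carefully: in particular, one must handle both parities for the number of twist sites (i.e., both the case where the first twist site is horizontal and the case where it is vertical), so as to confirm that the NW post meets the first twist site in each case.
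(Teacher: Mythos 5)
Your proposal is correct and follows essentially the same route as the paper: the paper likewise observes that the $y$-labeled strand occupies the NW and NE posts, that these posts are incident to the initial and final twist sites respectively, and then invokes Lemma~\ref{Lem mono x label} to conclude both sites are dichromatic. Your extra care about the herringbone geometry (which posts each end twist site touches, and the parity of the first site) is a reasonable elaboration of what the paper leaves implicit, not a different argument.
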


\begin{proof}  For a rational link with labels described above, we know that there is a strand with a $y$-label at the NW and NE post, which corresponds to the initial and final twist sites.  By Lemma \ref{Lem mono x label}, any twist site which has a $y$-label must also have an $x$-label.  Hence these two twist sites must be dichromatic.
\end{proof}

The following two lemmas are consequences of the way the twist sites in a rational tangle are connected to each other.

\begin{lemma}\label{Lem:no consec mono} A rational link $L$, in herringbone form, cannot have two consecutive monochromatic twist sites.
\end{lemma}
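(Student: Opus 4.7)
The plan is to extend the post-label bookkeeping used in the proof of Lemma~\ref{Lem mono x label}. Recall that during the herringbone construction, the four-ended sub-tangle built so far has labeled posts NW, NE, SE, SW, and at every stage exactly two posts carry the $y$-label and two carry the $x$-label. Because the NW post is never involved in any twist, it retains its original $y$-label throughout. A horizontal twist site operates on the NE and SE posts, while a vertical twist site operates on the SE and SW posts.

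I will argue by contradiction. Suppose that two consecutive twist sites $T_i$ and $T_{i+1}$ in the herringbone form of $L$ are both monochromatic. By Lemma~\ref{Lem mono x label}, both must be $x$-monochromatic, so the two strands twisted at each of these sites are $x$-labeled. By Condition~\ref{tangle cond 2} of the herringbone convention, $T_i$ and $T_{i+1}$ are of opposite type, so I split into two cases based on the orientation of $T_i$.

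If $T_i$ is horizontal, its active posts are NE and SE, both of which must carry the $x$-label by the monochromaticity of $T_i$; since NW is $y$, the remaining $y$-label must sit at SW. Then $T_{i+1}$ is vertical and twists the SE post (currently $x$) with the SW post (currently $y$), which forces $T_{i+1}$ to be dichromatic, contradicting our assumption. If instead $T_i$ is vertical, the symmetric argument forces SE and SW to be $x$ and hence NE to be $y$; then the horizontal site $T_{i+1}$ twists NE (label $y$) with SE (label $x$), producing the same contradiction.

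The only real subtlety is verifying that labels really do propagate between consecutive twist sites as claimed; this reduces to two small observations, namely that NW is never touched by any twist operation and that an $x$-monochromatic twist site leaves both of its active posts with the $x$-label they already held (the labels cannot swap when they agree). I do not expect any genuine obstacle — the proof is essentially a two-coloring bookkeeping argument combined with the alternation condition on twist orientations.
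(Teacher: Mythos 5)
Your proof is correct, but it takes a genuinely different route from the paper's. The paper argues by an extremal/propagation scheme: it first shows that if the two sites immediately preceding the final site $r$ were both monochromatic, then $r$ itself would be monochromatic, contradicting Corollary~\ref{Cor: first and last}; it then takes a right-most consecutive monochromatic pair $i$, $i+1$ and shows their connecting strands force site $i+2$ to be monochromatic as well, contradicting right-most-ness. You instead push the post-label bookkeeping from Lemma~\ref{Lem mono x label} one step further and obtain a purely local statement: a monochromatic site occupies both endpoints of the $x$-strand at its two active posts, so the remaining two posts (one of which is shared with the next twist site by the horizontal/vertical alternation of Condition~\ref{tangle cond 2}) must both carry the $y$-label, forcing the \emph{immediately following} site to be dichromatic. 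This is slightly stronger than the lemma as stated, needs neither Corollary~\ref{Cor: first and last} nor the right-most-pair device, and the one subtlety you flag (that a monochromatic twist cannot swap its two identical labels, and that NW is never touched) is exactly the content already verified in the paper's proof of Lemma~\ref{Lem mono x label}. What the paper's version buys in exchange is that its propagation picture (monochromaticity cascading rightward through the connecting strands) is reused almost verbatim in the Case~2 analysis of Lemma~\ref{lemma HV red}, so the authors get to set up that machinery once; your version buys brevity and locality.
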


\begin{proof}  Two consecutive twist sites in a rational link in herringbone form will have two strands (one from each site) that connect to the next consecutive twist site.  Suppose $\hat{L}$ has $k+2$ twist sites and the last two internal twist sites ($q_{k-1}$ and $q_k$) are monochromatic (with label $x$).  The NE strand of $q_{k-1}$ and the SE strand of $q_k$ will twist around each other to form $r$, see Figure \ref{consec sites}$(A)$.  Since the internal sites were monochromatic, both of the strands creating the final twist site have an $x$ label, making the final twist site monochromatic, contradicting Corollary \ref{Cor: first and last}.  Therefore, $\hat{L}$ cannot have two consecutive monochromatic twist sites immediately preceding the last twist site.

\begin{figure}[h]
        \centering
         \begin{subfigure}[h]{0.65\textwidth}
  \centering
    \includegraphics[width=.65\textwidth]{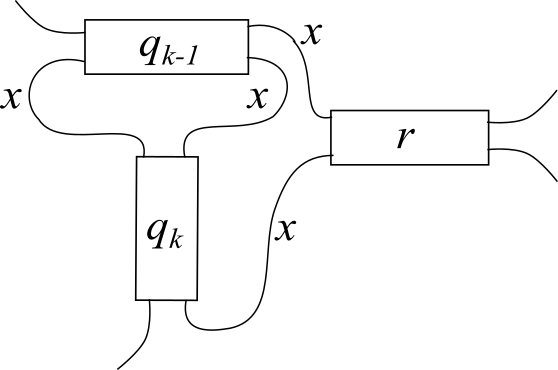}
 \caption{}\label{consec sites a}
         \end{subfigure}%
        % \quad
          %add desired spacing between images, e. g. ~, \quad, \qquad etc.
           %(or a blank line to force the subfigure onto a new line)
         \begin{subfigure}[h]{0.5\textwidth}
               \centering
    \includegraphics[width=.5\textwidth]{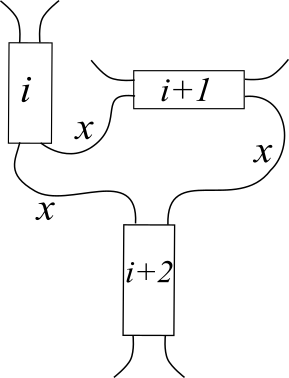}
\caption{}\label{consec sites b}
         \end{subfigure}
          %add desired spacing between images, e. g. ~, \quad, \qquad etc.
           %(or a blank line to force the subfigure onto a new line)
    \begin{subfigure}[h]{0.65\textwidth}
               \centering
    \includegraphics[width=.65\textwidth]{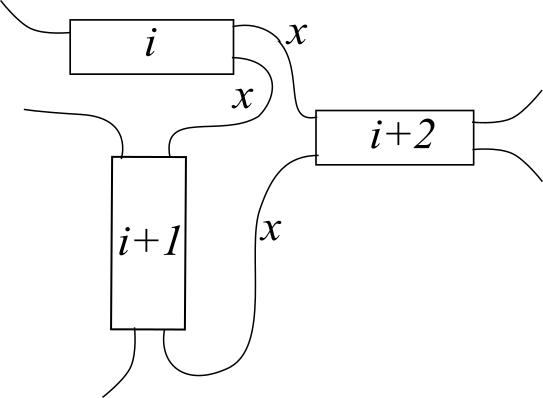}
\caption{}\label{consec sites c}
         \end{subfigure}
          %add desired spacing between images, e. g. ~, \quad, \qquad etc.
           %(or a blank line to force the subfigure onto a new line)
 \caption{Twist sites and the connecting strands between them.}\label{consec sites}
 \end{figure}
 
Now suppose that $\hat{L}$ has two consecutive monochromatic internal twist sites (with label $x$), sites $i$ and $i+1$.  By the above argument, these twist sites cannot immediately precede the last twist site.  However, suppose also that they are the right-most such internal twist sites in the herringbone diagram.  These two twist sites will each have one strand which twist around each other to create twist site $i+2$, see Figures \ref{consec sites} $(B)$and \ref{consec sites}$(C)$.  Since sites $i$ and $i+1$ were monochromatic, both of the strands creating twist site $i+2$ have an $x$ label, making site $i+2$ monochromatic.  However, this means that sites $i+1$ and $i+2$ are a pair of consecutive monochromatic twist sites, further to the right than $i$ and $i+1$, a contradiction.

It follows that $\hat{L}$ cannot have two consecutive monochromatic twist sites.

\end{proof}

Hence, for a rational link $pq_1q_2\ldots q_kr$ in herringbone form we know that $p$ and $r$ are dichromatic (Corollary \ref{Cor: first and last}), and possible monochromatic twist sites can occur at the $q_i$ sites, though $q_i$ and $q_{i+1}$ cannot both be monochromatic (Lemma \ref{Lem:no consec mono}).  For a rational link with $m$ monochromatic twist sites, we will use $\hat{q}_1,\ldots,\hat{q}_m$ to specify the monochromatic sites (again, $\hat{q}_i$ will be both its position and its length).  For example, if $m=2$ with $q_2$ and $q_4$ as the monochromatic sites, we will write the link as $pq_1\hat{q}_1q_3\hat{q}_2q_5r$.

A rational link will have many of its active edges within twist sites.  There are also active edges between the twist sites.  In particular, the edges that connect horizontal and vertical twist sites play an important role.

\begin{definition}  An \underline{\bf{HV connector edge}} is an active edge of a rational tangle which connects a horizontal twist site and the next consecutive vertical twist site.
\end{definition}

\begin{definition}  A \underline{\bf{VH connector edge}} is an active edge of a rational tangle which connects a vertical twist site and the next consecutive horizontal twist site.
\end{definition}

\begin{lemma}\label{lemma HV red}  In a rational link in herringbone form, an HV connector edge has an $x$ label if and only if exactly one of the twist sites it connects is monochromatic.
\end{lemma}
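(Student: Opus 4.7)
The plan is to track the color labels at the four ports NW, NE, SW, SE of the growing rational tangle as the herringbone is constructed site by site, and then identify the HV connector edge with the strand at a specific port. Start from the zero tangle whose strands are NW-NE (carrying label $y$) and SW-SE (carrying label $x$), so the port-label vector begins as $(y,y,x,x)$. A horizontal site twists NE with SE, and a vertical site twists SW with SE; in both cases the two twisted labels swap iff the twist has odd parity. By Lemma \ref{Lem mono x label} a site is monochromatic exactly when the two ports it twists carry the same label just before the twist. Because only one port besides NW can bear a $y$ label at any stage, the port vector is at every stage of one of exactly three types: \textbf{I}$=(y,y,x,x)$, \textbf{II}$=(y,x,x,y)$, or \textbf{III}$=(y,x,y,x)$, and a short transition table records how these types evolve under each possible twist parity.

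Next I would identify the HV connector edge. At the staircase corner joining $H$ to the next vertical site $V$, two diagram edges connect $H$'s rightmost crossing to $V$'s topmost crossing. Since every vertical-site crossing is incident to the unbounded region $r_0$, the edge on the $r_0$ side of the corner is inactive; the unique active HV connector is therefore the interior edge, and it carries the color of the strand at $H$'s SE port immediately after $H$ has been processed.

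The proof then reduces to a case analysis on the post-$H$ type. In Type \textbf{I} the SE label is $x$ and SW $=x$, so $V$ is monochromatic; moreover Type I is reached only from Types I or II (both of which have NE $\neq$ SE just before $H$), so $H$ is dichromatic, exactly one of $H,V$ is monochromatic, and the connector is $x$ as required. In Type \textbf{II} the SE label is $y$ with SW $=x$, so $V$ is dichromatic; consulting the table, $H$ is dichromatic as well, and the connector is $y$ with neither site monochromatic. In Type \textbf{III} the SE label is $x$ and SW $=y$, so $V$ is dichromatic; since no other type transitions to III under a horizontal twist, the state was already Type III before $H$, forcing NE $=$ SE $=x$ pre-$H$ and hence $H$ monochromatic, so exactly one of $H,V$ is monochromatic and the connector is $x$. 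In every case the HV connector edge is labeled $x$ if and only if exactly one of $H,V$ is monochromatic.

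The principal obstacle is justifying the identification of the HV connector with the strand at $H$'s SE port: one must argue that of the two diagram edges at the $H$-$V$ corner, the one facing $r_0$ is inactive, so that the unique active connector lies on the interior side. Once that identification is established, the lemma follows from the short bookkeeping above; it also confirms, as a side effect, that Types I, II, III exhaust all possibilities and that consecutive monochromatic sites are excluded, consistent with Lemma \ref{Lem:no consec mono}.
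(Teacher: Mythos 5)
Your argument is correct, but it proceeds quite differently from the paper's. The paper splits the equivalence into two implications: for one direction it observes that a $y$-labeled connector would put a $y$-strand through both sites, forcing both to be dichromatic by Lemma \ref{Lem mono x label}; for the other it assumes a red HV connector joins two dichromatic sites, takes the \emph{rightmost} such pair, and propagates labels one step to the right to produce either a contradiction at the final twist site or a violation of rightmost-ness, then invokes Lemma \ref{Lem:no consec mono} to rule out two monochromatic sites. You instead set up a three-state automaton on the port-label vector $(\mathrm{NW},\mathrm{NE},\mathrm{SW},\mathrm{SE})$ --- legitimate because NW is permanently $y$ and only one other post can carry $y$ --- and read off both the colors of the sites and the color of the connector from the state reached after $H$. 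Your transition table is right (Type III is absorbing under horizontal twists, Type I is absorbing under vertical twists, and I/II, II/III interchange according to parity), and the three cases do verify the biconditional in one pass; as you note, the same table re-derives Lemma \ref{Lem:no consec mono} for free, whereas the paper needs that lemma as an input. What the paper's extremal argument buys is that it never has to identify the connector edge with a particular strand of the tangle; what your automaton buys is a uniform, exhaustive computation with no case left implicit and no appeal to a ``rightmost counterexample.''

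The one step you rightly flag as the load-bearing geometric fact is the identification of the HV connector with the SE-post strand after $H$. That identification is correct and easy to justify, but your local picture of it is slightly off: the last crossing of $H$ (between the NE and SE strands) and the first crossing of $V$ (between the SE and SW strands) share only the SE strand, so there is exactly \emph{one} edge joining them, not two; there is no inactive companion edge to eliminate. Since that unique edge lies on the SE strand and is active by the paper's definition of an HV connector, your conclusion stands --- the worry dissolves rather than needing the $r_0$-incidence argument you sketch. A second cosmetic point: the criterion ``a site is monochromatic exactly when the two twisted ports agree just before the twist'' follows from the definition of monochromatic, not from Lemma \ref{Lem mono x label}.
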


\begin{proof}
Suppose exactly one of the twist sites $i$ and $i+1$ is monochromatic and that the HV connector edge between these two twist sites is green (with a $y$-label).  It follows that the $y$ strand twists through both the $i$ and $i+1$ twist sites.   By Lemma \ref{Lem mono x label}, any twist site which has a $y$-label must also have an $x$-label.  Hence both twist sites must be dichromatic, a contradiction.  It follows that if exactly one of the twist sites is monochromatic, the HV connector edge must have an $x$-label.

Now suppose there exists a red HV connector edge between two dichromatic twist sites.  We will consider two cases.  

\underline{Case 1}:  The two twist sites are the sites preceding the final twist site as in Figure \ref{fig:case 1}.

\begin{figure}[h]
  \centering
    \includegraphics[width=.45\textwidth]{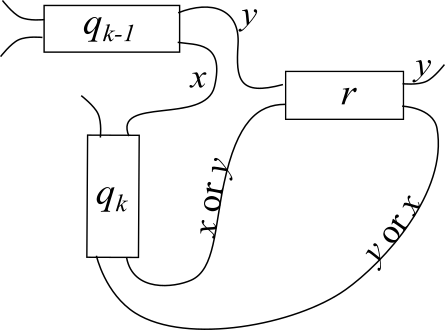}
 \caption{Case 1.}\label{fig:case 1}
 \end{figure}
 
The HV connector edge has an $x$-label, and we know that sites $q_{k-1}$ and $q_k$ are dichromatic.  This means that the NE post of $q_{k-1}$ must have a $y$-label and the SW and SE posts of $q_k$ will have one $x$-label and one $y$-label.  We also know that since this is a link diagram, the NE post of $r$ will have a $y$-label.  It follows that $r$ has three $y$-labels and only one $x$-label, which is a contradiction. Case 1 cannot occur.

\underline{Case 2}: The two twist sites are internal at positions $i$ and $i+1$ as in Figure \ref{fig:case 2}. 

\begin{figure}[h]
   \centering
    \includegraphics[width=.45\textwidth]{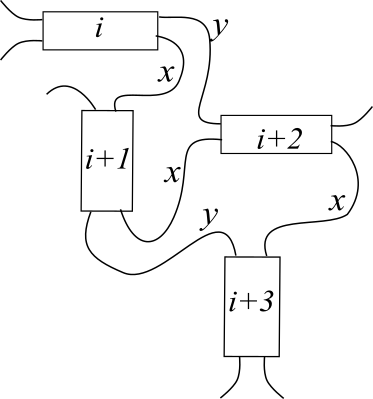}
 \caption{Case 2.}\label{fig:case 2}
 \end{figure}
 
Assume that these are the right-most pair of twist sites in the diagram that  are dichromatic with a red HV connector edge.  We know the HV connector edge has an $x$-label and since site $i$ is dichromatic, we know that the label at its NE post must be $y$.  Furthermore, the label at the SE post of $i+1$ must be $x$, or else these two strands will create a monochromatic $y$ site at $i+2$, which is not possible.  Since $i+1$ is dichromatic, the SW post must have a $y$-label.  This strand goes into the NW post of site $i+3$, giving this post a $y$-label.  This site cannot be monochromatic $y$, and so the NE post must have an $x$-label.  This creates a red ($x$-labeled) HV connector edge between $i+2$ and $i+3$, which are both dichromatic.  This contradicts our assumption that we had the right-most pair of such twist sites.  It follows that a red HV connector edge cannot connect two dichromatic twist sites.

Since we cannot have two consecutive monochromatic sites, it follows that a red HV connector edge connects exactly one monochromatic and one dichromatic twist site. 

\end{proof}

\section{Results}

We will now make more connections between the Alexander polynomial and clocked states of a rational link.  The over- and under-crossing information will be useful and so instead of placing clock markers on a link universe, they will be placed on the diagram.  Edges and vertices of the link universe are replaced by edges and crossings of the link diagram.  An overstrand has two pieces; that is, each half of the overstrand is either an active or inactive edge.

Recall that a rational link in herringbone form ensures that there is a choice of adjacent regions where all of the crossings are incident to one of the starred regions (the special crossings are incident to both).  In order to create the Alexander matrix, the link diagram must have a label and an orientation on each component, and a choice of starred regions.  When the link is in herringbone form with crossings and regions numbered as in Figure \ref{fig:link numbering}, we will always specify $r_0$ and $r_{n+1}$ to be the starred regions.

\begin{definition} A labeled and oriented rational link with a herringbone diagram is in \underline{\bf{standard form}} if:

\begin{itemize}
\item The link has one component labeled $y$ which is an unknotted circle and the special edge is part of the $y$ component.
\item The link has one component labeled $x$ which may or may not have self-crossings.
\item At crossing $c_1$, the undercrossing ($y$ component) orientation is pointing NW and the overcrossing ($x$ component) orientation is pointing NE.
\end{itemize}

\end{definition}

Lemma \ref{a and i alternate} states an important consequence of standard form.  This lemma is trivially true for the Hopf link.

\begin{lemma}\label{a and i alternate} For a rational link in standard form, the edges of the $y$-component alternate between active and inactive.  The $x$-component has two inactive edges in a row at each special vertex, otherwise they alternate.
\end{lemma}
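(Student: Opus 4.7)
The plan is to determine, edge by edge, whether each edge of the diagram borders one of the starred regions ($r_0$ unbounded or $r_{n+1}$ enclosed by the numerator closure arc), and then to read the alternation directly off the herringbone geometry. Since an edge is inactive iff it borders a starred region, the standard-form conventions pin down the local picture tightly enough to turn the statement into a combinatorial check.

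First I would resolve the two special vertices. At $c_1$, I unpack the four incident edges: the NW edge is the special edge on $y$ (bordering both starred regions); the NE edge lies on $x$ and borders the north region, which is $r_{n+1}$; the SW edge lies on $x$ and borders the west region, which is $r_0$; and the SE edge lies on $y$ between two interior regions. Three of the four are inactive, the unique active edge is on $y$, and the two $x$-edges at $c_1$ are both inactive and consecutive on the $x$-component. This already delivers the ``two inactive in a row'' clause of the lemma, and a mirror argument handles $c_n$.

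Next I would traverse the remaining edges. The essential geometric input is that every crossing in a horizontal twist site is incident to $r_{n+1}$ and every crossing in a vertical twist site is incident to $r_0$. Combined with the over/under alternation of an alternating diagram, this forces the $y$-strand to alternate between being the ``top'' strand (whose adjacent edge borders $r_{n+1}$, hence inactive) and the ``bottom'' strand (whose adjacent edge borders only interior regions, hence active) at consecutive crossings inside a horizontal dichromatic site; the vertical case is symmetric with $r_0$ replacing $r_{n+1}$. For the connector edges carried by $y$, Lemma~\ref{lemma HV red} guarantees that a $y$-labeled HV or VH connector bridges two dichromatic sites, and a local check then shows both regions it borders are interior, so the edge is active. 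For the $x$-component, the same local analyses give alternation at every non-special crossing; at each monochromatic $x$-crossing the four $x$-edges form two opposite pairs, with exactly one active edge per pair, and $x$ traverses each pair once, preserving alternation on each pass.

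The chief obstacle is the seam analysis: I must verify that the last internal $y$-edge of each twist site and the adjacent connector edge together continue the alternation, which amounts to a parity check based on the length of the twist site and the posts at which $y$ enters and exits. Similar care is needed for $x$ at the monochromatic sites and especially near $c_1$ and $c_n$, where the ``two inactive in a row'' anomaly has to mesh properly with the alternation elsewhere on $x$. Once this uniform parity bookkeeping at the twist-site/connector interfaces is pinned down, the lemma follows by aggregating the local checks.
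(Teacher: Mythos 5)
Your overall strategy --- classify each edge as active or inactive by checking whether it borders $r_0$ or $r_{n+1}$, then read the alternation off the herringbone geometry --- is sound, and your analysis of the special vertices $c_1$ and $c_n$ reaches the right conclusion. But the step you defer as the ``chief obstacle,'' the seam analysis at the twist-site/connector interfaces, is exactly where the content of the lemma lives, and you leave it as an unexecuted parity-bookkeeping task rather than proving it. An argument that works inside each twist site plus an unverified claim that alternation survives every transition between sites is not yet a proof. A smaller issue: the appeal to Lemma \ref{lemma HV red} is irrelevant here, since the color of a connector edge has no bearing on whether it is active; that depends only on which regions the edge borders.

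The paper closes the gap you flag with one local observation that makes all seam checks unnecessary. Because a rational link in herringbone form has no spinners, every crossing other than $c_1$ and $c_n$ is a boundary vertex, incident to exactly one starred region, and at such a vertex the two active edges are \emph{adjacent} --- they are the two edges in the quadrants away from the starred region (Figure \ref{fig:three vertex types}(B)). Since a strand passes straight through a crossing, its two edges there are opposite one another, so it uses exactly one active and one inactive edge at every non-special crossing, whether that crossing sits in the interior of a twist site or at the interface with a connector edge. Alternation along each component is then immediate, and the only exceptions occur at the special vertices, where three of the four incident edges are inactive: the $y$-component picks up the single inactive special edge joining the two special crossings, which preserves its alternation, while the $x$-component passes straight through each special crossing and collects two inactive edges in a row. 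If you replace your site-by-site regional bookkeeping with this uniform local fact, your argument becomes complete and essentially coincides with the paper's.
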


\begin{proof} Recall Figure \ref{fig:three vertex types}, which shows the three possible types of crossings.  Since the rational link is in standard form, there are two special crossings, $n-2$ boundary crossings, and no spinners.  

Notice that at a boundary crossing, the active edges are adjacent to each other.  However, when you travel along one component of the link, you go ``straight'' at each crossing.  This means that along one component, away from the special vertices, the component will alternate between active and inactive edges.  By definition of standard form, the $y$-component contains the special edge connecting the two special vertices.  In Figure \ref{fig:three vertex types}$(A)$, this is the edge labeled $i$ which has stars on both sides.  Therefore, when traveling along the $y$-component, there is a single inactive edge connecting the two special crossings and so the edges of the $y$-component alternate between active and inactive, as indicated in Figure \ref{fig:y strand alt}.

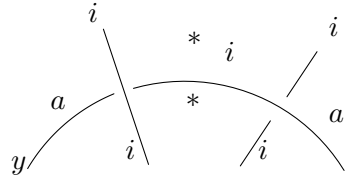
\begin{figure}[h]
\begin{tikzpicture}[line cap=round,line join=round,>=triangle 45,x=1.0cm,y=1.0cm]
\clip(-4.3,3.3) rectangle (4.3,6.3);
\draw [shift={(-0.34,2.34)}] plot[domain=1.9636668342595445:2.6069983301487976,variable=\t]({1.*2.4030813552603663*cos(\t r)+0.*2.4030813552603663*sin(\t r)},{0.*2.4030813552603663*cos(\t r)+1.*2.4030813552603663*sin(\t r)});
\draw [shift={(-0.32,2.24)}] plot[domain=0.5500743726751895:1.8480797259872657,variable=\t]({1.*2.4868453912537465*cos(\t r)+0.*2.4868453912537465*sin(\t r)},{0.*2.4868453912537465*cos(\t r)+1.*2.4868453912537465*sin(\t r)});
\draw (-0.42,5.52) node[anchor=north west] {$*$};
\draw (-2.76,3.86) node[anchor=north west] {$y$};
\draw (-2.24,4.66) node[anchor=north west] {$a$};
\draw (1.46,5.72) node[anchor=north west] {$i$};
\draw (-0.44,4.7) node[anchor=north west] {$*$};
\draw (1.46,4.48) node[anchor=north west] {$a$};
\draw (0.52,4.08) node[anchor=north west] {$i$};
\draw (-1.24,4.08) node[anchor=north west] {$i$};
\draw (-1.74,5.9) node[anchor=north west] {$i$};
\draw (0.08,5.38) node[anchor=north west] {$i$};
\draw (-1.4,5.42)-- (-0.8,3.62);
\draw (1.44,5.12)-- (1.04,4.52);
\draw (0.82,4.2)-- (0.42,3.6);
\end{tikzpicture}
\caption{The $y$-component alternates between active and inactive edges.}\label{fig:y strand alt}
\end{figure}

On the other hand, the $x$-component will travel straight across each of the special crossings.  That is, the $x$-component will have two inactive edges in a row as it crosses each special crossing.  Away from the special crossings, the edges alternate. See Figure \ref{fig:x strand almost alt}.

\begin{figure}[h]
\begin{tikzpicture}[line cap=round,line join=round,>=triangle 45,x=1.0cm,y=1.0cm]
\clip(-4.3,3.3) rectangle (4.3,6.3);
\draw [shift={(0.1,1.58)}] plot[domain=1.0648459275238205:2.1025203940537027,variable=\t]({1.*3.383962174729499*cos(\t r)+0.*3.383962174729499*sin(\t r)},{0.*3.383962174729499*cos(\t r)+1.*3.383962174729499*sin(\t r)});
\draw [shift={(-0.46,2.38)}] plot[domain=2.1611030737302688:2.601173153319209,variable=\t]({1.*2.4074052421642684*cos(\t r)+0.*2.4074052421642684*sin(\t r)},{0.*2.4074052421642684*cos(\t r)+1.*2.4074052421642684*sin(\t r)});
\draw [shift={(0.66,2.72)}] plot[domain=0.4678492649476779:0.9151007005533605,variable=\t]({1.*2.128755504984074*cos(\t r)+0.*2.128755504984074*sin(\t r)},{0.*2.128755504984074*cos(\t r)+1.*2.128755504984074*sin(\t r)});
\draw (-2.18,5.08)-- (-1.04,3.56);
\draw (2.24,5.26)-- (1.42,3.64);
\draw (0.08,5.66)-- (0.1,5.08);
\draw (0.1,4.76)-- (0.1,3.72);
\draw (-3,3.9) node[anchor=north west] {$x$};
\draw (-1.16,5.58) node[anchor=north west] {$*$};
\draw (0.78,5.62) node[anchor=north west] {$*$};
\draw (-0.76,5.44) node[anchor=north west] {$i$};
\draw (1.14,5.36) node[anchor=north west] {$i$};
\draw (0.06,6.12) node[anchor=north west] {$i$};
\draw (-2.14,5.5) node[anchor=north west] {$i$};
\draw (2.16,5.64) node[anchor=north west] {$i$};
\draw (-1.1,4.04) node[anchor=north west] {$a$};
\draw (0.08,4.12) node[anchor=north west] {$a$};
\draw (1.44,4.06) node[anchor=north west] {$a$};
\draw (-2.54,4.48) node[anchor=north west] {$a$};
\draw (2.22,4.54) node[anchor=north west] {$a$};
\end{tikzpicture}
\caption{The $x$-component alternates between active and inactive edges, except when crossing through a special crossing, where it travels over two inactive edges in a row.}\label{fig:x strand almost alt}
\end{figure}
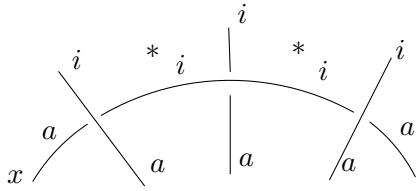
\end{proof}

Returning to the consideration of the Alexander dots, it is important to understand how each state in Kauffman's lattice contributes to the Alexander polynomial.  Recall Figure \ref{fig:alex labels} which showed the clocked state contributes $1\cdot x\cdot -1\cdot 1\cdot -x=x^2$ to $(1-y)\Delta(x,y)$.  We can also determine the contribution from any given state if we know how many $x$ and $y$ Alexander dots are enclosed by the state markers.  That is, we can create a term $\pm x^iy^j$ and use the convention that the term is positive if $i+j$ is even and negative if $i+j$ is odd.  See Figure \ref{fig:state contr ex} below.  Since there are two dots from the $x$-component (and no other dots) within state markers, the contribution is $x^2$.

\begin{figure}[h]
   \centering
    \includegraphics[width=.35\textwidth]{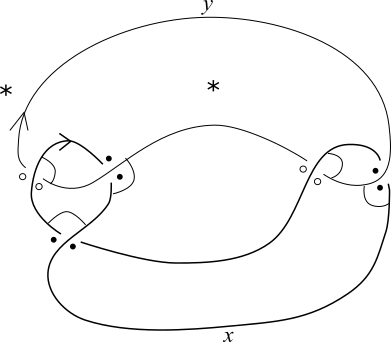}
\caption{An oriented link diagram with Alexander dots and clock markers.}\label{fig:state contr ex}
\end{figure}

When the markers of a state change with a clock move, the contribution changes as well.  In particular, each active edge in the diagram can be considered as an \textbf{\textit{upper}} or a \textbf{\textit{downer}}.  When a clock move occurs, the markers at the endpoints of the active edge will have one endpoint (specifically the undercrossing endpoint) which either gains (upper) or loses (downer) an Alexander dot.  The other endpoint (the overcrossing endpoint)  will remain neutral (either remain empty or remain dotted).  Since the orientation of the undercrossing determines the placement of the Alexander dots, it is the active edge that determines the dots at the crossing in question.  That is, the dots and the active edge will be of the same color/label.

It follows that in an alternating diagram, when the orientation of the active edge travels from the overcrossing to the undercrossing, the active edge is an upper, and when the orientation of the active edge travels from the undercrossing to the overcrossing, the active edge is a downer (see Figure \ref{fig:up and down}).  This figure shows two edges, one of which is an upper and the other a downer.  However, only one of the edges will be an active edge of the diagram.  That is, we can declare all edges in a diagram as uppers or downers, but it is only the active edges that will contribute to the power of $x$ and $y$ in the polynomial.  We will make use of the fact that uppers and downers alternate in Lemma \ref{green are uppers}.

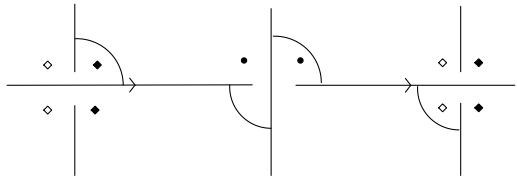
\begin{figure}[h]
\begin{tikzpicture}[line cap=round,line join=round,>=triangle 45,x=1.5cm,y=1.5cm]
\clip(-2.0,2.0) rectangle (3.1,3.8);
\draw (-1.88,2.9)-- (0.29251665657498827,2.903878070367597);
\draw (-0.7460959586397359,2.9020240855917896) -- (-0.793632321187945,2.8406802612330937);
\draw (-0.7460959586397359,2.9020240855917896) -- (-0.7938510222370666,2.963197809134502);
\draw (0.68,2.9)-- (2.62,2.9);
\draw (1.6976457889827072,2.9) -- (1.65,2.838741128450805);
\draw (1.6976457889827072,2.9) -- (1.65,2.961258871549194);
\draw (-1.28,3.62)-- (-1.28,3.02);
\draw (-1.28,2.72)-- (-1.28,2.1);
\draw (0.46,3.58)-- (0.46,2.1);
\draw (2.14,3.6)-- (2.14,3.02);
\draw (2.14,2.74)-- (2.14,2.1);
\draw [shift={(-1.2632509350022234,2.901100933457045)}] plot[domain=0.0017850570842782717:1.6024936980735798,variable=\t]({1.*0.41238392414888775*cos(\t r)+0.*0.41238392414888775*sin(\t r)},{0.*0.41238392414888775*cos(\t r)+1.*0.41238392414888775*sin(\t r)});
\draw [shift={(0.46,2.8857590574716014)}] plot[domain=3.0932194708252934:4.71238898038469,variable=\t]({1.*0.36735093413922343*cos(\t r)+0.*0.36735093413922343*sin(\t r)},{0.*0.36735093413922343*cos(\t r)+1.*0.36735093413922343*sin(\t r)});
\draw [shift={(0.49252842496487775,2.923098016720637)}] plot[domain=0.0017850570842782704:1.6024936980735802,variable=\t]({1.*0.412383924148888*cos(\t r)+0.*0.412383924148888*sin(\t r)},{0.*0.412383924148888*cos(\t r)+1.*0.412383924148888*sin(\t r)});
\draw [shift={(2.1251842954871254,2.871518114943203)}] plot[domain=3.0932194708252934:4.71238898038469,variable=\t]({1.*0.3673509341392238*cos(\t r)+0.*0.3673509341392238*sin(\t r)},{0.*0.3673509341392238*cos(\t r)+1.*0.3673509341392238*sin(\t r)});
\begin{scriptsize}
\draw [fill=black] (0.22,3.12) circle (1.0pt);
\draw [fill=black] (0.72,3.12) circle (1.0pt);
\draw [color=black] (-1.52,3.08) ++(-1.5pt,0 pt) -- ++(1.5pt,1.5pt)--++(1.5pt,-1.5pt)--++(-1.5pt,-1.5pt)--++(-1.5pt,1.5pt);
\draw [color=black] (-1.52,2.68) ++(-1.5pt,0 pt) -- ++(1.5pt,1.5pt)--++(1.5pt,-1.5pt)--++(-1.5pt,-1.5pt)--++(-1.5pt,1.5pt);
\draw [fill=black] (-1.08,3.08) ++(-1.5pt,0 pt) -- ++(1.5pt,1.5pt)--++(1.5pt,-1.5pt)--++(-1.5pt,-1.5pt)--++(-1.5pt,1.5pt);
\draw [fill=black] (-1.1,2.68) ++(-1.5pt,0 pt) -- ++(1.5pt,1.5pt)--++(1.5pt,-1.5pt)--++(-1.5pt,-1.5pt)--++(-1.5pt,1.5pt);
\draw [color=black] (1.98,3.1) ++(-1.5pt,0 pt) -- ++(1.5pt,1.5pt)--++(1.5pt,-1.5pt)--++(-1.5pt,-1.5pt)--++(-1.5pt,1.5pt);
\draw [color=black] (1.98,2.7) ++(-1.5pt,0 pt) -- ++(1.5pt,1.5pt)--++(1.5pt,-1.5pt)--++(-1.5pt,-1.5pt)--++(-1.5pt,1.5pt);
\draw [fill=black] (2.3,3.1) ++(-1.5pt,0 pt) -- ++(1.5pt,1.5pt)--++(1.5pt,-1.5pt)--++(-1.5pt,-1.5pt)--++(-1.5pt,1.5pt);
\draw [fill=black] (2.3,2.7) ++(-1.5pt,0 pt) -- ++(1.5pt,1.5pt)--++(1.5pt,-1.5pt)--++(-1.5pt,-1.5pt)--++(-1.5pt,1.5pt);
\end{scriptsize}
\end{tikzpicture}
\caption{In an alternating diagram, the direction of the active edge determines when a clock move is an upper or a downer.}\label{fig:up and down}
\end{figure}

By identifying the active edges as either uppers or downers, we can determine the effect of the clock moves on the Alexander polynomial. For example, a clock move performed on an upper $x$ edge will increase the power on the $x$ variable in that state's contribution.  Our conventions for standard form guarantee that a rational link has a diagram for which the clocked term will only contain $y^0$.  Furthermore, we will show that we can perform clock moves on active $x$-edges only, creating a sub-lattice of states that does not contain any $y$ factors.  These states contribute to the bottom row of $(1-y)\Delta(x,y)$. This row is identical to the bottom row of $\Delta(x,y)$.

%First we show that for a labeled, oriented  rational link in standard form, the active edges of the $y$ (or green) component are all uppers.

\begin{lemma}\label{green are uppers}  For a rational link in standard form, all active green edges are uppers.
\end{lemma}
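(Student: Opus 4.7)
The plan is to pair two alternating sequences along the $y$-component---the active/inactive pattern of its edges from Lemma \ref{a and i alternate}, and the over/under pattern at its crossings forced by the diagram being alternating together with $y$ having no self-crossings---and verify that they are in phase, so that every active $y$-edge starts at an over-crossing and ends at an under-crossing. By the criterion illustrated in Figure \ref{fig:up and down}, this is precisely what makes an active edge an upper.

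The first step is to identify the special edge on the $y$-component. Because $y$ is an unknot with no self-crossings, the portion of $y$ running from $c_1$ out to the NW post, across the numerator closure, and back in through the NE post to $c_n$ passes through no crossings and is therefore a single edge of the universe. This arc bounds $r_0$ on its outside and $r_{n+1}$ on its inside, so it must be the special edge; in particular its endpoints are $c_1$ and $c_n$, and it is inactive. The standard-form conditions that $y$ is the undercrossing at $c_1$ with orientation pointing NW then force this special edge to be the first edge we leave $c_1$ on as we traverse $y$ in its orientation direction, with $c_n$ the next crossing we meet.

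With this anchor in place, both alternating sequences are pinned down. By Lemma \ref{a and i alternate} the edges along $y$ read inactive, active, inactive, active, \ldots starting from the special edge. Since the diagram is alternating and $y$ has no self-crossings, the over/under status at successive $y$-crossings alternates, starting from $c_1$ (under), so the crossings read under, over, under, over, \ldots. Pairing these two sequences, every active edge sits between an over-crossing and the following under-crossing and is therefore an upper.

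The step I expect to be most delicate is the identification in the second paragraph---verifying that the closure arc really is the special edge and that, under the standard-form conventions at $c_1$, it is the edge leaving $c_1$ rather than the one entering. Once this is pinned down, the remainder is a short phase-matching of the two alternating sequences, anchored at $c_1$.
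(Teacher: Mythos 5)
Your proof is correct and follows essentially the same route as the paper's: both anchor the argument at the special edge leaving $c_1$, then match the active/inactive alternation from Lemma \ref{a and i alternate} against the over/under alternation forced by the alternating diagram to conclude that every active $y$-edge runs from an overcrossing to the following undercrossing, i.e.\ is an upper. The only difference is that you spell out explicitly why the closure arc from $c_1$ to $c_n$ is the special edge, a point the paper simply asserts (it phrases the phase-matching as ``uppers and downers alternate,'' starting from the special edge being an inactive downer, but this is the same bookkeeping).
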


\begin{proof}
For a rational link $L$, suppose diagram $\hat{L}$ is in standard form with crossings and regions numbered in the usual way.  Travel along the $y$-component of $\hat{L}$ starting at crossing $c_1$ and following the orientation of the component.  The first edge encountered is inactive because it is the special edge between the two starred regions.  Because $\hat{L}$ is in standard form, this edge is oriented from an undercrossing to an overcrossing ($c_n$).  Hence, this edge is also a downer (see Figure \ref{fig:up and down}).  By Lemma \ref{a and i alternate}  we know that the $y$-component alternates between inactive and active edges.  Furthermore, in an alternating link uppers and downers must alternate as well.  Since the special edge is inactive and a downer, it follows that all inactive edges on the $y$-component are downers and all active edges on the $y$-component are uppers.
\end{proof}

Notice that since the $x$-component has two inactive edges in a row at each special vertex, the $x$-compoent will have both upper and downer active edges.  %Since all active $y$-edges are uppers, we know that the clocked term will not contain any $y$'s. 

\begin{corollary}\label{Cor:no y}  The clocked term in the Alexander polynomial will not have any $y$-factors.
\end{corollary}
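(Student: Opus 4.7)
The plan is to pin the clocked term to the minimum $y$-exponent in $\Delta(x,y)$, and then invoke the normalization to force that exponent to be $0$.

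By Theorem~2.5(2), every state is reachable from the clocked state via a sequence of clock moves. I would classify each such move by the color of its active edge. By Lemma~\ref{green are uppers}, every active green edge is an upper, so a clock move along an active green edge raises the $y$-exponent of the corresponding state monomial by exactly one; a clock move along an active red edge leaves the $y$-exponent unchanged and only alters the $x$-exponent. Consequently, no state can have a strictly smaller $y$-exponent than the clocked state, and the clocked term realizes the minimum $y$-exponent among all state contributions.

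To finish, I would appeal to the sign convention discussed at the end of Section~2.2 (even total degree positive, odd negative), which guarantees that state contributions to $(1-y)\Delta(x,y)$ do not cancel when the diagram is alternating. Hence the minimum $y$-exponent appearing among state monomials coincides with the minimum $y$-exponent of $(1-y)\Delta(x,y)$, which in turn equals the minimum $y$-exponent of $\Delta(x,y)$. The normalization of $\Delta(x,y)$ fixed in the abstract sets this minimum to $0$, so the clocked term is a pure power of $x$.

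The proof is short given Lemma~\ref{green are uppers}; there is no genuine obstacle. The only checkpoint that requires care is the non-cancellation of state monomials of lowest $y$-exponent, but this is a direct consequence of the sign convention already established for alternating diagrams.
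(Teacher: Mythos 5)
Your proof is correct, but it reaches the conclusion by a genuinely different route from the paper. The paper argues locally, crossing by crossing: a clocked marker sits in a dotted corner precisely when the active edge undercrossing there is a downer, and the dots at a crossing share the color of that active edge; since Lemma~\ref{green are uppers} makes every active green edge an upper, no clocked marker ever covers a $y$-dot, so the product of Alexander labels for the clocked state literally contains no $y$. You instead argue globally and extremally: green moves strictly raise the $y$-exponent (all green active edges being uppers), red moves preserve it, and every state is reachable from the clocked state by clock moves, so the clocked term realizes the minimum $y$-degree; non-cancellation for alternating diagrams and the normalization convention then pin that minimum at $0$. The trade-off is that the paper's version is normalization-free --- it shows the raw determinant term is a pure power of $x$, which is the concrete fact used later when reading off the bottom row of the Newton polygon --- whereas your version only identifies the clocked term as $y$-minimal and must invoke the normalization of $\Delta(x,y)$ to call that exponent zero. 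Both versions suffice for the main theorem, since only the terms of minimal $y$-degree matter there, and your monotonicity observation is essentially the mechanism the paper exploits implicitly when it restricts attention to red-edge cascades.
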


\begin{proof} Suppose we have a link diagram with Alexander dots and markers indicating the clocked state.  A marker in the clocked state will be in a corner that is also dotted when the active edge that has an undercrossing at the given crossing is a downer; the marker will start out in a dotted corner and as it rotates through clock moves it will end in a corner that is not dotted.  Similarly, when the marker is in a corner that is not dotted, the active edge that undercrosses at the given crossing will be an upper.  Since all of the active $y$-edges are uppers, the clocked term in the Alexander polynomial will not have any $y$-factors.
\end{proof}

\begin{lemma}\label{HV available}Let $L$ be a rational link with diagram $\hat{L}$ in standard form.  The clocked state of $\hat{L}$ will have clock moves available on all HV connector edges.
\end{lemma}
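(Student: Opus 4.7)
The plan is to verify the claim by direct local inspection of the clocked state at the two endpoints of an HV connector edge. Let $e$ be an HV connector edge with endpoints $c_h$, the last crossing of some horizontal twist site, and $c_v$, the first crossing of the next consecutive vertical twist site. By the numbering conventions of Section 2.3, every crossing of a horizontal twist site is incident to $r_{n+1}$ and every crossing of a vertical twist site is incident to $r_0$. In particular $c_h$ and $c_v$ are boundary crossings; each therefore has exactly two active incident edges, necessarily adjacent to one another, and $e$ is one of them at each endpoint.

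Next I would apply the clocked-state rule: at a boundary crossing, the clocked marker sits in the region obtained by rotating one quadrant clockwise from the incident starred region. Thus the marker at $c_h$ lies one quadrant clockwise from $r_{n+1}$ around $c_h$, and the marker at $c_v$ lies one quadrant clockwise from $r_0$ around $c_v$. Drawing the local herringbone picture of a horizontal twist site followed by a vertical twist site — using the standing conventions that horizontal sites are left-turning, vertical sites are right-turning, the NW/NE enclosed region is $r_{n+1}$, and the unbounded region is $r_0$ — I would track where these two clockwise quadrants actually sit. The goal is to show that both markers land in the two quadrants immediately adjacent to $e$, reproducing the pattern of Figure \ref{fig:clock move}(A); once that is confirmed, the clockwise $90^{\circ}$ rotation of the two markers about $e$ is a valid clock move, and the claim follows for $e$ (and hence, by uniformity, for every HV connector edge).

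The main obstacle is the geometric bookkeeping. The clocked marker at each endpoint is determined purely by local adjacency to a starred region, and one has to check that the two endpoints of $e$ line up so that both markers actually bracket $e$ — rather than, for example, landing on quadrants not incident to $e$ at all, or on quadrants that would correspond to a counterclock configuration. Here the herringbone conventions (left-turning horizontals, right-turning verticals, $r_0$ outside and $r_{n+1}$ enclosed above) are what fix the handedness of the local picture so that the two clockwise quadrants really do meet at $e$ in the pre-clock pattern. Once the local picture is drawn correctly, the configuration match to Figure \ref{fig:clock move}(A) is immediate and the lemma follows; no global induction over the herringbone is needed, because the argument is entirely local to a neighborhood of $e$.
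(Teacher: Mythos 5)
Your proposal is correct and follows essentially the same route as the paper: the paper's proof also locates the clocked marker at each boundary crossing by the ``clockwise from the starred region'' rule (landing in the E quadrant at horizontal sites and the N quadrant at vertical sites) and then observes that these two markers bracket the HV connector edge in the clock-move configuration. The only difference is that you leave the final quadrant bookkeeping as a picture to be drawn, whereas the paper records the answer explicitly and points to a figure.
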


\begin{proof}
Suppose $L$ is a rational link with diagram $\hat{L}$ in standard form, numbering crossings and regions as usual.  All crossings in the diagram have overcrossings with positive slope.  For each crossing we can determine which quadrant (see Figure \ref{fig:vertex quads}) the marker will be in for the clocked state.

\begin{figure}[h]
\begin{tikzpicture}[line cap=round,line join=round,>=triangle 45,x=1.0cm,y=1.0cm]
\clip(-4.58,2.0) rectangle (7.02,4.24);
\draw (0.,2.3)-- (1.42,4.);
\draw (0.,4.)-- (0.58,3.32);
\draw (0.86,3.)-- (1.42,2.3);
\draw (0.5,4.04) node[anchor=north west] {$N$};
\draw (0.92,3.5) node[anchor=north west] {$E$};
\draw (0.44,3.06) node[anchor=north west] {$S$};
\draw (-0.04,3.44) node[anchor=north west] {$W$};
\end{tikzpicture}
\caption{Quadrant labels for a typical crossing in a rational link in standard form}\label{fig:vertex quads}
\end{figure}
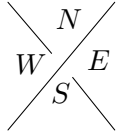

Recall that the markers in the clocked state are placed clockwise to the starred region(s).  So, at crossing $c_1$, the marker will be in the E quadrant, regardless of whether $p$ is horizontal or vertical.  At crossing $c_n$, the marker will be in the S quadrant.  For horizontal boundary vertices, the marker is in the E quadrant and for vertical boundary vertices the marker is in the N quadrant.  Since $\hat{L}$ is in herringbone form, the horizontal twist sites are all adjacent to $r_{n+1}$ and the vertical twist sites are all adjacent to $r_0$.  Hence $c_2, \ldots, c_{n-1}$ are all boundary vertices.  It follows that any HV connector edge will have a clock marker at each endpoint, see Figure \ref{fig:HV markers}.  These markers are in position to perform a clock move on the HV connector edge.  It follows that all HV connector edges have available clock moves in the clocked state.

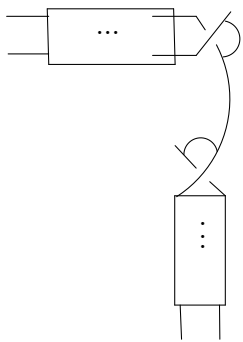
\begin{figure}[h]
\begin{tikzpicture}[line cap=round,line join=round,>=triangle 45,x=1.0cm,y=1.0cm]
\clip(-5.25,-1.0) rectangle (5.4,4.8);
%\fill(-1.76,3.76) -- (-0.08,3.76) -- (-0.06,3.02) -- (-1.74,3.02) -- cycle;
%\fill(-0.06545454545454535,1.2727272727272734) -- (0.6072727272727273,1.2727272727272734) -- (0.6072727272727273,-0.18181818181818074) -- (-0.06545454545454535,-0.18181818181818074) -- cycle;
\draw (-1.76,3.76)-- (-0.08,3.76);
\draw (-0.08,3.76)-- (-0.06,3.02);
\draw (-0.06,3.02)-- (-1.74,3.02);
\draw (-1.74,3.02)-- (-1.76,3.76);
\draw (-1.74,3.66)-- (-2.3,3.66);
\draw (-1.74,3.16)-- (-2.28,3.16);
\draw (0.68,3.72)-- (0.22,3.14);
\draw (-0.36,3.14)-- (0.22,3.14);
\draw (-0.36,3.66)-- (0.22,3.66);
\draw (0.22,3.66)-- (0.34,3.48);
\draw [shift={(-0.88,2.56)}] plot[domain=-0.9971296586267364:0.48503472185253566,variable=\t]({1.*1.5477725931156683*cos(\t r)+0.*1.5477725931156683*sin(\t r)},{0.*1.5477725931156683*cos(\t r)+1.*1.5477725931156683*sin(\t r)});
\draw (-1.25,3.6) node[anchor=north west] {...};
\draw (0.22,1.64)-- (-0.06,1.94);
\draw (0.4,1.46)-- (0.6072727272727273,1.2727272727272734);
\draw [shift={(0.56,3.36)}] plot[domain=-1.4876550949064562:1.373400766945016,variable=\t]({1.*0.24083189157584564*cos(\t r)+0.*0.24083189157584564*sin(\t r)},{0.*0.24083189157584564*cos(\t r)+1.*0.24083189157584564*sin(\t r)});
\draw [shift={(0.28,1.82)}] plot[domain=0.17985349979247844:3.141592653589793,variable=\t]({1.*0.22360679774997896*cos(\t r)+0.*0.22360679774997896*sin(\t r)},{0.*0.22360679774997896*cos(\t r)+1.*0.22360679774997896*sin(\t r)});
\draw (-0.06545454545454535,1.2727272727272734)-- (0.6072727272727273,1.2727272727272734);
\draw (0.6072727272727273,1.2727272727272734)-- (0.6072727272727273,-0.18181818181818074);
\draw (0.6072727272727273,-0.18181818181818074)-- (-0.06545454545454535,-0.18181818181818074);
\draw (-0.06545454545454535,-0.18181818181818074)-- (-0.06545454545454535,1.2727272727272734);
\draw (0.12,1.0594788765248147) node[anchor=north west] {.};
\draw (0.12,0.8929888286297833) node[anchor=north west] {.};
\draw (0.12,0.7473100367216309) node[anchor=north west] {.};
\draw (0.007272727272727431,-0.18181818181818074)-- (0.025454545454545556,-0.6363636363636351);
\draw (0.53,-0.18181818181818074)-- (0.5345454545454547,-0.6363636363636351);
\end{tikzpicture}
\caption{Clock markers on an HV connector edge.}\label{fig:HV markers}
\end{figure}

\end{proof}

By Lemma \ref{HV available}, all HV connector edges are available for performing a clock move, regardless of the label on the edge.  Notice that a similar argument would show that a VH connector edge will never have markers in the clocked state for which a clock move is available.  Since our goal is to maximize the number of $x$-edge moves performed, we are only concerned with red HV connector edges.  By Lemma \ref{lemma HV red}, we know that red HV connector edges are connected to exactly one monochromatic twist site.  It follows that when a clock move is performed on a red HV connector edge, it will trigger a \textbf{\emph{cascade}} of clock moves through the monochromatic site.  See, for example, Figure \ref{fig:cascade}.  

\begin{figure}[h]
   \centering
    \includegraphics[width=.25\textwidth]{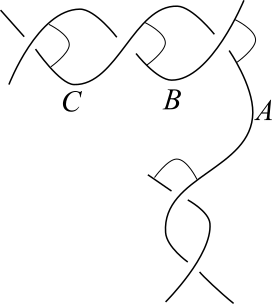}
\caption{An HV connector edge will set off a cascade of available clock moves.}\label{fig:cascade}
\end{figure}

Completing the clock move at edge A will make a clock move available on edge B and completing the move on edge B will make a move on edge C available.  All of these active edges will be red ($x$) edges, and so the corresponding states will not contain any $y$'s; this will be true for any red HV connector edge and the resulting cascade of red active edges in the monochromatic twist site.

In the clocked state, there are also available clock moves in the first and last twist sites of a rational link.  In particular, the final active edge in $r$ will always have a clock move available.  However, if the link is in standard form, this edge will be green ($y$).  Similarly, if $p$ is vertical, the first active edge in that twist site will have a clock move available; again, this edge will be green.  When $p$ is horizontal, the available clock move is on the HV connector edge as explained above.  This edge will either be green, and so we won't use it, or it will be red and will set off a cascade as explained above. 

We can now state our main result:

\begin{theorem}[Main Result]  Let $L$ be a rational link with reduced, alternating diagram $\hat{L}$ in standard form with $m$ monochromatic twist sites.  If $m=0$, then $\Delta(-1,0)=1$.  If $m>0$ and site $\hat{q}_j$ has $\hat{q}_j$ crossings, then $\Delta(-1, 0) = \prod_{j=1}^{m}(\hat{q}_j+1)$. 
\end{theorem}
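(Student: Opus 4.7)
The plan is to interpret $\Delta(-1,0)$ as a count of clock states of a specific type and then to identify those states with move sequences in cascades triggered by monochromatic twist sites. First, I would observe via the sign convention established in Section 2.2 that $\Delta(-1,0)$ equals the total number of clock states whose contribution has $y$-exponent zero. Indeed, setting $y=0$ discards all terms with positive $y$-power, and because even-total-degree terms have positive coefficients and odd-total-degree terms have negative coefficients, we have $a_{i,0}(-1)^i=|a_{i,0}|$, so the evaluation at $x=-1$ sums absolute values, i.e., counts states.

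Next, I would characterize these $y^0$-states as exactly those reachable from the clocked state by a sequence of clock moves along only $x$-labeled active edges. By Lemma \ref{green are uppers}, every active $y$-edge is an upper, so a $y$-move raises the $y$-power by one and no $y$-downer is available to undo it; consequently any path in the lattice that leaves $y^0$ cannot return, and any path that stays in $y^0$ uses only $x$-edges. Combined with the fact (noted after Lemma \ref{HV available}) that available clock moves commute and can be performed in any order, the $y^0$-states correspond bijectively to subsets of $x$-edges on which moves are eventually triggered when one restricts the lattice to $x$-moves.

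Then I would enumerate the available $x$-moves. By Lemma \ref{HV available}, every HV connector edge supports an available clock move in the clocked state; by Lemma \ref{lemma HV red}, the $x$-labeled ones are precisely those connecting a monochromatic site with a dichromatic site, so they come in bijection with the $m$ monochromatic sites. Performing the move on such a red HV connector sets off the cascade pictured in Figure \ref{fig:cascade}: each completed move inside the monochromatic twist site uncovers the next internal active edge, all of which are $x$-labeled, until the $\hat{q}_j$ edges of that site are exhausted. The other places in the clocked state where a move is locally available --- the edges inside the first and last (dichromatic) twist sites, and the last active edge of $r$ --- are $y$-edges by standard form, and VH connector edges carry no markers in clock position (the remark following Lemma \ref{HV available}); so no further $x$-moves arise outside the $m$ cascades. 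The hardest step will be verifying that as each cascade progresses it does not open any new $x$- or $y$-move elsewhere in the diagram that would break the independence of the cascades; this requires a local analysis around the cascade's final crossing and the adjacent dichromatic sites, using the herringbone geometry and Lemma \ref{lemma HV red}.

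Once independence is secured, counting is immediate. For each monochromatic site $\hat{q}_j$ one can stop the cascade after $0,1,\ldots,\hat{q}_j$ moves, yielding $\hat{q}_j+1$ distinct $y^0$-states from that site; since the cascades act on disjoint sets of edges and do not interfere, the total number of $y^0$-states equals $\prod_{j=1}^m(\hat{q}_j+1)$. When $m=0$ the product is empty, giving $\Delta(-1,0)=1$ with the clocked state as the unique $y^0$-state, in agreement with the statement.
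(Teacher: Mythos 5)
Your proposal follows essentially the same route as the paper's proof: the clocked term has no $y$-factor (Corollary \ref{Cor:no y}), all active $y$-edges are uppers so the $y$-degree never decreases along the lattice, the red HV connector edges correspond exactly to the $m$ monochromatic sites (Lemmas \ref{HV available} and \ref{lemma HV red}), and each resulting cascade contributes a factor of $\hat{q}_j+1$, giving the product count. You are in fact somewhat more explicit than the paper about why $\Delta(-1,0)$ counts the $y^0$-states and about the need to verify that the cascades are independent and open no new moves elsewhere --- a point the paper asserts via the ``moves can be performed in any order'' observation without carrying out the local verification you flag.
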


\begin{proof}
The clocked state for $\hat{L}$ will not contain any $y$'s, by Corollary \ref{Cor:no y}.  When we consider possible available clock moves, we know that HV connector edges are available by Lemma \ref{HV available}.  If $m=0$, then there are no red HV connector edges by Lemma \ref{lemma HV red}.  This means that the clocked state is the only state which does not contain any $y$'s.  It follows that $\Delta(-1,0)=1$.

On the other hand, if $m>0$, there are red HV connector edges.  We can perform all red HV connector moves without increasing the $y$ degree.  Furthermore, each red HV connector edge will start a cascade of red active edges.  Since clock moves can be performed in any order, it remains to show how many different ways we can perform the red HV connector moves along with the moves in each cascade.

For each monochromatic twist site $\hat{q}_j$, $j=1,\ldots,m$, let $x_j$ represent the number of active edges in that twist site that have been used.  For example, consider the monochromatic site in Figure \ref{fig:cascade} with active edges B and C within the twist site.  For this particular site, we can use just edge B or both edges B and C (as they become available).  So $x_j$ can be 0, 1, 2, or 3.  Here, if $x_j=0$, this means that move $A$ has been completed. If $x_j=1$ it represents using the edge B not C, since C cannot be performed without performing the move at edge B first.

Now create the set $\left\{ (x_1,\ldots,x_m), 0\leq x_j\leq \hat{q}_j\right\}$.  Each element of this set represents some combination of active edges that have been used in each of the monochromatic twist sites, including the corresponding HV connector edge.  The size of this set is $(\hat{q}_1+1)(\hat{q}_2+1)\cdots(\hat{q}_m+1)$.  It follows that there will be this many terms in $(1-y)\Delta(x,y)$ that contain only the $x$ variable.  Therefore, $\Delta(-1, 0) = \prod_{j=1}^{m}(\hat{q}_j+1)$.

\end{proof}

We illustrate our main result with the following example.  

\begin{example}

\end{example}
Consider the rational link (with Conway notation 221122) in Figure \ref{fig:bwex2edges} below.  

\begin{figure}[h]
   \centering
    \includegraphics[width=.65\textwidth]{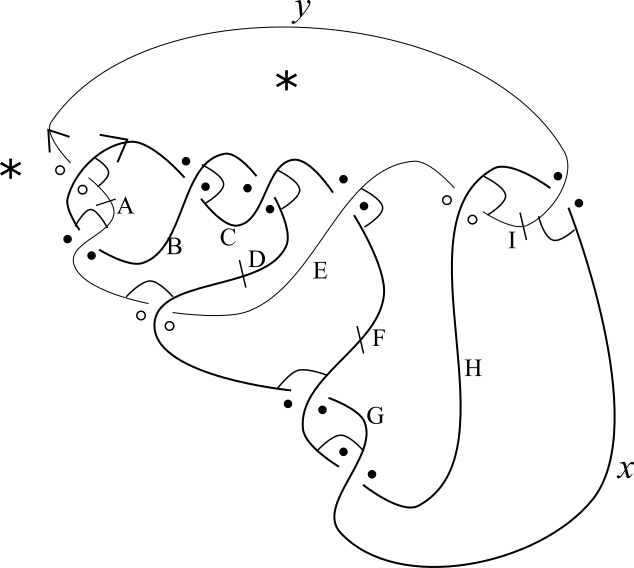}
\caption{The link 221122 with Alexander dots, clock markers, and edge labels.}\label{fig:bwex2edges}
\end{figure}

The active edges in this diagram are labeled $A$ through $I$; edges $A$, $D$, $F$, and $I$ have clock moves available in the clocked state.  Edges $A$ and $I$ are part of the $y$ component, and so we will not perform these moves. Edges $D$ and $F$ are both HV connector edges, each connected to exactly one monochromatic twist site.  Performing a clock move on edge $D$ will begin a cascade in that a clock move on edge $C$ will become available.  Similarly, performing the move on edge $F$ will result in a move on edge $G$ becoming available.  Clock moves at $D$ and $F$ can be done in any order, though we cannot perform $C$ without $D$, nor $G$ without $F$.  

From the placement of the Alexander dots, the clocked state contributes $x^3$ to the bottom row of $(1-y)\Delta(x,y)$.  Moves on edges $D$ and $G$ increase the $x$-degree, moves on edges $C$ and $F$ decrease the $x$-degree. The lattice in Figure \ref{ex2lattice} shows the contribution of each state in the bottom row of $(1-y)\Delta(x,y)$.

\begin{figure}[h]
\centering
\begin{tikzpicture}
\node (C) {$x^3$};
\node (L11)[node distance=1.5cm, left of=C, below of=C]{$x^4$};
\node (L12)[node distance=1.5cm, right of=C, below of=C]{$x^2$};
\node (L21)[node distance=1.5cm, left of=L11, below of=L11]{$x^3$};
\node (L22)[node distance=1.5cm, right of=L11, below of=L11]{$x^3$};
\node (L23)[node distance=1.5cm, right of=L12, below of=L12]{$x^3$};
\node (L31)[node distance=1.5cm, left of=L22, below of=L22]{$x^4$};
\node (L32)[node distance=1.5cm, right of=L22, below of=L22]{$x^2$};
\node (CC)[node distance=1.5cm, right of=L31, below of=L31]{$x^3$};
\draw[->] (C) to node[above] {$D$} (L11);
\draw[->] (C) to node[above] {$F$} (L12);
\draw[->] (L11) to node[above] {$C$} (L21);
\draw[->] (L11) to node[above] {$F$} (L22);
\draw[->] (L12) to node[above] {$D$} (L22);
\draw[->] (L12) to node[above] {$G$} (L23);
\draw[->] (L21) to node[above] {$F$} (L31);
\draw[->] (L22) to node[above] {$C$} (L31);
\draw[->] (L22) to node[above] {$G$} (L32);
\draw[->] (L23) to node[above] {$D$} (L32);
\draw[->] (L31) to node[above] {$G$} (CC);
\draw[->] (L32) to node[above] {$C$} (CC);
\end{tikzpicture}
\caption{}
\label{ex2lattice}
\end{figure}

There are nine states in this lattice.  This link has two monochromatic twist sites, each of length two and $(2+1)(2+1)=9$, as expected.

\end{document}